\newtheorem{thm}{Theorem}
\newtheorem{lem}[thm]{Lemma}
\newtheorem{cor}[thm]{Corollary}
\newtheorem{conj}[thm]{Conjecture}
\newtheorem{clm}{Claim}
\newtheorem{problem}{Problem}
\def\VEC#1#2#3{#1_{#2},\ldots,#1_{#3}}
\newcommand\KG{\mathop{\rm KG}}
\newcommand{\di}{\displaystyle}
\def\qed{\ifhmode\unskip\nobreak\hfill$\Box$\bigskip\fi \ifmmode\eqno{Box}\fi}
\newcommand\R{\mathcal R}
\begin{document}

\title{Triangle-free subgraphs with large fractional chromatic number}

\author{
  Bojan Mohar\thanks{Supported in part by the NSERC Discovery Grant R611450 (Canada),
  by the Canada Research Chairs program,
  and by the Research Project J1-8130 of ARRS (Slovenia).}\\[1mm]
  Department of Mathematics\\
  Simon Fraser University\\
  Burnaby, BC, Canada\\
  {\tt mohar@sfu.ca}
\and
  Hehui Wu\thanks{Part of this work was done while the author was a PIMS Postdoctoral Fellow at the Department of Mathematics, Simon Fraser University, Burnaby, B.C.}\\[1mm]
  Shanghai Center for Math.\ Sci.\\
  Fudan University\\
  Shanghai, China\\
  {\tt hhwu@fudan.edu.cn}
}

\maketitle

\begin{abstract}
  It is well known that for any integers $k$ and $g$, there is a graph with chromatic number at least $k$ and girth at least $g$. In 1960's, Erd\H{o}s and Hajnal conjectured that for any $k$ and $g$, there exists a number $h(k,g)$, such that every graph with chromatic number at least $h(k,g)$ contains a subgraph with chromatic number at least $k$ and girth at least $g$. In 1977, R\"{o}dl proved the case for $g=4$ and arbitrary $k$. We prove the fractional chromatic number version of R\"{o}dl's result.
\end{abstract}

\section{Outline}

A well known result, proved by Erd\H{o}s in 1950s \cite{Erd}, tells us that for every $k$ and $g$, there exists a graph with chromatic number at least $k$ and girth at least $g$. In 1960s, Erd\H{o}s and Hajnal proposed the following \cite{EH}.

\begin{conj}[Erd\H{o}s and Hajnal (1969)]
\label{conj:ErdosHajnal}
For every positive integers $k$ and $g$, there exists an integer $h(k,g)$ such that every graph $G$ with $\chi(G)\ge h(k,g)$ contains a subgraph with chromatic number at least $k$ and girth at least $g$.
\end{conj}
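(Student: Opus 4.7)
Conjecture~\ref{conj:ErdosHajnal} is open in general; only the cases $g=3$ (trivial, take $h(k,3)=k$) and $g=4$ (R\"odl, 1977) are established. My plan is induction on $g$, which isolates the one essential missing ingredient. Suppose inductively that $h(k',g-1)$ exists for every $k'$. Given $G$ with $\chi(G) \geq h(k,g)$, I would first apply the inductive hypothesis with a very large target $K = K(k,g)$ to extract $G' \subseteq G$ of girth at least $g-1$ with $\chi(G') \geq K$. Within such a $G'$ it then suffices to find $H \subseteq G'$ containing no $(g-1)$-cycle and with $\chi(H) \geq k$. Thus the whole conjecture reduces to a \emph{girth-increasing lemma}: every graph of girth at least $g-1$ and chromatic number at least $K(k,g)$ contains a subgraph of girth at least $g$ and chromatic number at least $k$.

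For this lemma I would first try probabilistic sparsification: delete each edge of $G'$ independently with probability tuned so that few $(g-1)$-cycles survive in expectation, and then break the survivors by a second pass of edge removals. This certainly destroys all $(g-1)$-cycles, but controlling $\chi$ of the residue from below is the delicate step. A complementary, more combinatorial tactic is to consider the hypergraph on $E(G')$ whose hyperedges are the $(g-1)$-cycles, properly colour its edges, and delete the sparsest colour class; if this auxiliary hypergraph has bounded chromatic number, the remaining subgraph retains a positive fraction of $\chi(G')$.

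The main obstacle, and the reason the conjecture has resisted proof for $g \geq 5$, is that $\chi$ can collapse catastrophically under even sparse edge removals when the removed edges happen to be chromatically critical. This is precisely where the paper's passage to the fractional relaxation becomes decisive in the $g=4$ case: since $\chi_f(G) \geq |V(G)|/\alpha(G)$, one only needs to bound $\alpha$ after sparsification, and independence numbers can only grow when edges are deleted. I would therefore take the authors' fractional R\"odl theorem as the base step of the induction and view the girth-increasing lemma as the single remaining hurdle toward the full conjecture.
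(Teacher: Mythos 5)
You correctly recognize that Conjecture~\ref{conj:ErdosHajnal} is open for $g\ge 5$: the paper states it only as a conjecture, proves the \emph{fractional} analogue of R\"odl's $g=4$ result (Theorem~\ref{thm:main}), and verifies the conjecture for Kneser graphs (Corollary~\ref{Cor:KGBU}), but no proof of the general statement is given here or anywhere else. So what you have is a research plan, not a proof, and you say so honestly.

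Two parts of your sketch nevertheless deserve caution. First, the reduction via induction on $g$ to a ``girth-increasing lemma'' is logically valid but does not actually lower the difficulty: the lemma you isolate is exactly the conjecture restricted to inputs that already have girth at least $g-1$, and there is no evidence that this extra hypothesis helps (it plays no role in R\"odl's $g=4$ argument, nor in this paper). Second, your final paragraph has a directional confusion in the motivation for going fractional. Deleting edges can only \emph{increase} $\alpha$, so the bound $\chi_f(H)\ge |V(H)|/\alpha(H)$ gets \emph{weaker} after sparsification, not easier to exploit; controlling the (weighted) independence number of the residual graph is the hard part, not a freebie. Indeed the proof of Theorem~\ref{thm:main} does not use uniform random edge deletion at all: it builds $H$ by keeping, for each vertex $v$, only the edges from $v$ to one randomly chosen colour class in an optimal fractional colouring of the left neighbourhood $L_G(v)$, and then extracts the independence-weight bound from a union bound over ``dense'' sets combined with the reducible-set decomposition (Lemmas~\ref{LNFC}--\ref{lem:9}). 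Your diagnosis that $\chi$ is fragile under sparsification and that the fractional viewpoint is the right lens points in a sensible direction, but the mechanism is considerably more structured than your sketch indicates, and the $g\ge 5$ case remains genuinely open.
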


In 1977, R\"{o}dl~\cite{Rodl} proved the conjecture for $g=4$ and arbitrary $k$. The special case when $g=4$ speaks about triangle-free subgraphs. This is the only nontrivial case for which the Erd\H{o}s-Hajnal conjecture has been confirmed.

\begin{thm}[R\"odl (1977)]
\label{thm:Rodl}
For every positive integer $k$, there exists an integer $f(k)$ such that if $\chi(G)\ge f(k)$ then $G$ contains a triangle-free subgraph $H$ with $\chi(H)=k$.
\end{thm}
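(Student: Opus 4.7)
The plan is to prove the theorem by induction on $k$. The cases $k\le 2$ are trivial, and $k=3$ asks for an odd cycle of length at least five; this reduces to the structural statement that a graph whose only odd cycles are triangles has bounded chromatic number (one analyses how such triangles must share edges and shows a bounded proper colouring can be built).

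For the inductive step, I would assume $f(k-1)$ exists and set $f(k)$ to be a rapidly growing function of $f(k-1)$, to be determined. Given $G$ with $\chi(G)\ge f(k)$, the inductive hypothesis supplies a triangle-free subgraph $H_0\subseteq G$ with $\chi(H_0)\ge k-1$. The heart of the argument is to augment $H_0$ to a triangle-free subgraph of chromatic number at least $k$. The natural approach is to look, among vertices $u\in V(G)\setminus V(H_0)$, for one whose neighborhood inside $V(H_0)$ is \emph{independent in $H_0$} (so that attaching $u$ by these edges keeps the graph triangle-free) while being ``chromatically unavoidable'' in $H_0$ (so that the chromatic number strictly increases). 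A cleaner variant is to search for a Mycielski-type configuration built on $H_0$ inside $G$: finding such a configuration automatically boosts the chromatic number by one while preserving triangle-freeness.

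The main obstacle is to guarantee that such an augmenting vertex or configuration always exists when $\chi(G)$ is large. This is a Ramsey/density statement: among the vast number of vertices in $G$, a pigeonhole argument applied to the possible ``interaction patterns'' between $u$ and $V(H_0)$ should produce either a useful augmenter or a dense substructure in which the inductive hypothesis can be re-applied. To close the induction, I would iterate this analysis inside carefully chosen subgraphs of $G$, such as common neighborhoods of short chains of vertices or the high-chromatic pieces of $G-V(H_0)$, and bound the chromatic loss at each step. Controlling the resulting rapidly growing bound $f(k)$, and keeping the combinatorial conditions on successive augmentations consistent across the iteration so that enough chromatic number survives each pruning, is the technical crux of the proof.
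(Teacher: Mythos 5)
Your proposal takes a genuinely different route from the paper, but the route has gaps that are not minor: they are exactly the heart of the problem, left unresolved.

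The paper's (R\"odl's) argument does not perform induction on $k$ at all. It fixes a linear order of $V(G)$ and uses the key lemma (Lemma~\ref{R:LNC}): if $\chi(G)>k^t$ and every left-neighbourhood $N^L(v)$ has $\chi(N^L(v))\le t$, then $G$ has a triangle-free subgraph of chromatic number exceeding $k$. The theorem then follows by iterating: either all left-neighbourhoods have small chromatic number and the lemma applies directly, or some $N^L(v)$ has very large chromatic number and one recurses inside it; after enough recursions the recorded vertices form a large clique, which trivially contains any fixed triangle-free graph of chromatic number $k$. This is a top-down, order-based decomposition, not a bottom-up augmentation.

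Your proposal, by contrast, tries to grow a triangle-free high-chromatic subgraph one step at a time, and this is where the gaps appear. First, the base case $k=3$ is stated as ``one analyses how such triangles must share edges and shows a bounded proper colouring can be built,'' with no argument; this is itself a nontrivial lemma (that a graph whose only odd cycles are triangles has bounded chromatic number), and you would need to prove it. Second and more importantly, the inductive step is not a proof sketch but a restatement of the difficulty. Given $H_0$ triangle-free with $\chi(H_0)\ge k-1$, there is no reason a vertex $u\in V(G)\setminus V(H_0)$ should exist whose $G$-neighbourhood inside $V(H_0)$ is independent in $H_0$ and simultaneously rainbow in every proper $(k-1)$-colouring of $H_0$. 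The subgraph $H_0$ guaranteed by the inductive hypothesis is arbitrary and could be tiny, $G-V(H_0)$ could be bipartite, and nothing links the global lower bound $\chi(G)\ge f(k)$ to the existence of an augmenting vertex or a Mycielski-type twin set for a particular $H_0$. The suggestion to ``iterate this analysis inside carefully chosen subgraphs \ldots\ and bound the chromatic loss at each step'' and that ``controlling the resulting rapidly growing bound \ldots\ is the technical crux'' names the crux without supplying it; this is precisely the mechanism that R\"odl's left-neighbourhood lemma was invented to provide, and no substitute mechanism is offered. As written, the proposal does not constitute a valid outline of a proof.
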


Let $\mathcal{I}(G)$ be the family of all independent sets of $G$, and let $\mathcal{I}(G,v)$ be the family of all those independent sets which contain the vertex $v$. For each independent set $I$, consider a nonnegative real variable $y_I$. The \emph{fractional chromatic number} of $G$, denoted by $\chi_f(G)$, is the minimum value of $$\sum_{I\in\mathcal{I}(G)} y_I, \quad \hbox{ subject to } \sum_{I\in\mathcal{I}(G,v)} y_I \ge 1 \hbox{ for each } v\in V(G).
$$

Erd\H{o}s actually proved in \cite{Erd} that there exist graphs with large girth and large fractional chromatic number.
In this paper, we prove the fractional chromatic number version of R\"{o}dl's result.

\begin{thm}
\label{thm:main}
For every real number $x\ge1$, there exists a positive number $k(x)$ such that every graph $G$ with $\chi_f(G)\ge k(x)$ contains a triangle-free subgraph $H$ with $\chi_f(H)\ge x$.
\end{thm}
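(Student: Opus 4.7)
My plan is to adapt R\"odl's inductive argument underlying Theorem~\ref{thm:Rodl}, but track $\chi_f$ rather than $\chi$ throughout. A naive reduction fails: since $\chi(G)\ge\chi_f(G)$, Theorem~\ref{thm:Rodl} yields a triangle-free $H\subseteq G$ with $\chi(H)$ as large as desired, but the Kneser graphs $\KG(3k-1,k)$ show that $\chi_f(H)$ can remain arbitrarily close to $3$ while $\chi(H)\to\infty$. The challenge is therefore to argue directly about $\chi_f$.

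Concretely, I would proceed by a neighborhood iteration. First reduce to the case where $G$ has large minimum degree; low-degree vertices contribute negligibly to $\chi_f$ and may be peeled off. By LP duality, $\chi_f(G)\ge k(x)$ is witnessed by a probability distribution $\mu$ on $V(G)$ charging every independent set by at most $1/k(x)$. A pigeonhole over $\mu$ should yield a vertex $v_1$ whose neighborhood $G[N(v_1)]$ still has $\chi_f$ nontrivially large (smaller than $\chi_f(G)$, but only by a controlled factor). Because any edge inside $N(v_1)$ would create a triangle through $v_1$, the key combinatorial move is to include in the target subgraph $H$ only edges from $v_1$ to a carefully chosen independent subset $I_1\subseteq N(v_1)$, and then iterate inside (a suitable induced subgraph of) $G[N(v_1)]$. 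Successive iterations produce centers $v_1,v_2,\ldots,v_t$ and independent ``attachments'' $I_1,I_2,\ldots,I_t$; the union of the resulting stars is a triangle-free $H\subseteq G$, and its $\chi_f$-witness is assembled from the LP-duals at the successive layers via a product or convex-combination construction.

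The main obstacle is verifying that the assembled witness genuinely certifies $\chi_f(H)\ge x$. Independent sets of $H$ may cross many of the layers built during the iteration, so their $\mu_H$-mass is not automatically controlled by any single level. The proof will therefore need a Ramsey-type or sunflower-style pigeonhole argument showing that every ``crossing'' independent set of $H$ either concentrates in a single level (where the inherited LP-dual bound excludes large mass) or decomposes into level-by-level pieces whose combined weight is still bounded by $1/x$. This is the step where the quantitative bound $k(x)$ will blow up, almost certainly super-exponentially in $x$, and where the fractional setting parts company from R\"odl's original counting.
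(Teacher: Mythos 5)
Your opening observation is correct and well taken: since $\chi_f\le\chi$, Theorem~\ref{thm:Rodl} applied directly yields a triangle-free subgraph with large $\chi$ but possibly tiny $\chi_f$, so one must work with $\chi_f$ throughout. But the plan as sketched has two gaps that go to the heart of the problem.

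First, the iteration ``into successive neighborhoods'' must terminate quickly and you have no plan for when it does. The chain $v_1,v_2,\ldots$ with $v_{i+1}\in N(v_1)\cap\cdots\cap N(v_i)$ is a clique, so it can be continued at most $\omega(G)$ times, and the pigeonhole step (``there is a vertex $v_1$ whose neighborhood still has large $\chi_f$'') is simply false in general: in a triangle-free $G$ every neighborhood is independent and the pigeonhole yields nothing. The paper's proof of Theorem~\ref{thm:main} uses exactly this dichotomy: either the chain reaches length $r(x)$ (where $r(x)$ is the minimum order of a triangle-free graph $Q$ with $\chi_f(Q)\ge x$), in which case the clique $\{v_1,\dots,v_{r(x)}\}$ already contains a copy of $Q$ and we are done, or the chain stops early, in which case we land in a graph where every left-neighborhood has bounded $\chi_f$. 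That second case is not a degenerate leftover; it is where essentially all the work lives, and it is handled by Lemma~\ref{LNFC}, the fractional analogue of R\"odl's Lemma~\ref{R:LNC}, whose proof (reducible sets, the dense/sparse decomposition, the random star-attachment at \emph{every} vertex, and the probabilistic union bound over dense sets) is the real content of the paper. Your sketch contains no analogue of this lemma and no mechanism to replace it.

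Second, the graph $H$ you propose to output would not have large $\chi_f$. A union of stars centered at the chain vertices $v_1,\dots,v_t$, with $t\le\omega(G)$, is a very thin graph: it is close to a tree or a union of a few complete bipartite graphs, and $\chi_f(H)$ stays bounded (typically $2$). No convex combination or product of LP duals from the successive layers can certify $\chi_f(H)\ge x$ because $H$ itself does not have $\chi_f$ anywhere near $x$. The paper avoids this trap in both branches: in the long-chain branch it extracts $Q$ from the clique rather than from the stars, and in the short-chain branch the random stars are attached at every vertex of the graph (or of $\overline{R}$), not along a single chain, so that $H$ is a spanning subgraph with the weight/independence property carried through the recursion. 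Your ``Ramsey-type or sunflower-style pigeonhole'' paragraph is pointed at the right worry (controlling independent sets that cross levels), but in the paper that control is achieved by Claim~\ref{CLM:sparseislight} on $s$-sparse sets and Lemma~\ref{lem:9}, not by a Ramsey or sunflower argument, and it depends crucially on having the random stars everywhere rather than along a chain.
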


Given this result, we put forward the following.

\begin{conj}
\label{conj:1}
For every real number $x\ge1$ and any integer $g\ge3$, there exists a positive number $k(x,g)$ such that every graph $G$ with $\chi_f(G)\ge k(x,g)$ contains a subgraph $H$ of girth at least $g$ and with $\chi_f(H)\ge x$.
\end{conj}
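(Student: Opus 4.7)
The plan is to proceed by induction on the girth parameter $g$. The cases $g=3$ (take $H=G$) and $g=4$ (Theorem \ref{thm:main}) serve as base cases, so assume $g\ge5$ and that Conjecture \ref{conj:1} already holds with $g$ replaced by $g-1$. Given $x\ge1$, pick a parameter $K=K(x,g)$ to be determined. By the inductive hypothesis, any graph with $\chi_f\ge k(K,g-1)$ already contains a subgraph $G'$ of girth at least $g-1$ and with $\chi_f(G')\ge K$. The task then reduces to a \emph{raising lemma}: if $K$ is sufficiently large in terms of $x$ and $g$, then every graph $G'$ of girth at least $g-1$ with $\chi_f(G')\ge K$ contains a subgraph $H$ of girth at least $g$ with $\chi_f(H)\ge x$. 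Composing the raising lemma with the inductive hypothesis then yields the desired $k(x,g)$.

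To establish the raising lemma, I would mirror the structure of the proof of Theorem \ref{thm:main}, letting $(g-1)$-cycles play the role previously played by triangles. Since $G'$ has girth at least $g-1$, these are the only obstruction to having girth $g$, and two distinct $(g-1)$-cycles can share only a path (never creating a shorter cycle). Concretely, I would sample each vertex of $G'$ independently with a carefully chosen probability $p=p(x,g)$, delete one vertex from each surviving $(g-1)$-cycle in the random induced subgraph, and lower-bound $\chi_f$ of the resulting subgraph $H$ using the LP-dual
\[
\chi_f(H)=\max_{\omega\,:\,V(H)\to\RR_{\ge0}}\frac{\sum_v\omega(v)}{\alpha_\omega(H)},
\]
by starting from an optimal weight function $\omega^*$ certifying $\chi_f(G')\ge K$, restricting it to the random surviving vertices, and showing that with positive probability the weighted independence ratio remains at least $x$. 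Tuning $p$ so that the expected number of $(g-1)$-cycles is a small fraction of the weighted mass should allow the alteration step to cost only a bounded factor in $\chi_f$.

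The main obstacle is controlling concentration and the independence-set bound simultaneously when $g\ge5$. For triangles, two distinct cycles share at most an edge, so survival events are essentially locally dependent and a Janson- or first-moment argument suffices; for $(g-1)$-cycles, two cycles can share long paths, producing much richer correlation. Moreover, a large-weight independent set in the random $H$ lifts back to a weighted ``almost-independent'' configuration in $G'$ that is not directly forbidden by $\chi_f(G')\ge K$, so the bound $\alpha_{\omega^*}(H)\le \sum_v\omega^*(v)/x$ does not come for free. I would attack this by an iterated nibble, peeling off cycle lengths $3,4,\ldots,g-1$ one at a time with a bounded multiplicative loss in $\chi_f$ at each stage, using Kim--Vu polynomial concentration (or a Lov\'asz Local Lemma argument on cycle-survival events) to handle correlations, and interleaving each random deletion with a clean-up phase that restores an LP-dual certificate of large fractional chromatic number. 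Making the weight function survive every stage is the real difficulty, and it is what prevents the argument for $g=4$ from extending mechanically; hence this proposal should be viewed as a roadmap rather than a routine extension, consistent with the fact that even the integer analogue remains open for $g\ge5$.
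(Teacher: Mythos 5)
The statement you are trying to prove is Conjecture~\ref{conj:1}; the paper does not prove it, but poses it as open, establishing only the case $g=4$ (Theorem~\ref{thm:main}). Your proposal does not close that gap. The induction on $g$ merely reduces the conjecture to an unproven ``raising lemma'' (girth $\ge g-1$ plus huge $\chi_f$ yields a girth-$\ge g$ subgraph with $\chi_f\ge x$), which carries essentially the full strength of the conjecture, and the probabilistic sketch offered for it fails quantitatively. In the sampling-plus-alteration step, the weight that must be deleted (one vertex per surviving $(g-1)$-cycle) cannot be bounded in terms of $x$, $g$, or $K$: a graph with $\chi_f(G')\ge K$ may have arbitrarily many $(g-1)$-cycles through a single vertex, since there is no maximum-degree hypothesis here — unlike in Theorem~\ref{THM:BUT}, whose local-lemma/alteration argument works precisely because $\Delta(G)\le\Delta$ and the blow-up power is taken large relative to $\Delta$. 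Hence no fixed $p=p(x,g)$ keeps the deleted weight small compared with the surviving weight (roughly $p\,w(V)$); forcing $p\to0$ with the cycle count destroys the weighted independence ratio $\approx pK$, so no choice of $K=K(x,g)$ rescues the argument. Invoking Kim--Vu concentration or an iterated nibble does not address this, because the obstruction is the unbounded expected cycle count, not correlation among survival events.

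There is also a structural reason the $g=4$ argument does not extend the way you hope, which your roadmap does not confront. R\"odl's scheme and its fractional version (Lemma~\ref{LNFC}) kill triangles \emph{locally}: a triangle whose last vertex in the ordering is $v$ shows up as an edge inside the left-neighborhood $N^L(v)$, so re-attaching $v$ only to a randomly chosen independent set of $N^L(v)$ destroys all such triangles, while the hypothesis $\chi_f(N^L(v))\le l$ provides the distribution and controls the loss in the weight of independent sets. For $g\ge5$, a $(g-1)$-cycle through $v$ is invisible in $N^L(v)$ — its other vertices need not be neighbors of $v$ at all — so there is no analogous local structure to randomize over, and no local certificate (like the fractional colorings of the sets $L_A(v)$ used in the reducible-set machinery) that a clean-up phase could restore. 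Your closing sentence concedes that making the weight function survive each stage is exactly the missing step; as written, the proposal is a reduction plus an unproved lemma, not a proof, which is consistent with the statement's status as an open conjecture.
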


There is a natural question whether the chromatic number in the Erd\H{o}s-Hajnal conjecture should be replaced by the fractional chromatic number. This leads us to graphs with large chromatic number and small fractional chromatic number.
The most vulnerable examples in this respect might be Kneser graphs, for which the conjecture should be tested first.
As our second contribution, we show in Section \ref{sect:blow-up} that the Erd\H{o}s-Hajnal Conjecture \ref{conj:ErdosHajnal} holds for Kneser graphs. The difference from the required value of the chromatic number of Kneser graphs versus the chromatic number of their subgraphs of large girth and chromatic number is very reasonable. See Theorem \ref{THM:KGBU} and Corollary \ref{Cor:KGBU}.

\subsection*{Related problems}

The Erd\H{o}s-Hajnal conjecture and the fact that random graphs provide examples of graphs of large girth and large fractional chromatic number motivates the following question. Let $G$ be any graph with large (fractional) chromatic number. Is it true that a random subgraph of $G$ will also have large (fractional) chromatic number (with high probability). Boris Bukh \cite{Bukh} asked a weaker question, whether a random subgraph of a graph with chromatic number $n$ has chromatic number comparably large to the chromatic number of a random graph of order $n$. More precisely, if $G$ is a graph and $p \in [0,1]$, we let $G_p$ denote a subgraph of $G$ where each edge of $G$ appears in $G_p$ independently with probability $p$.

\begin{problem}[Bukh]
\label{prb:Bukh}
Does there exist a constant $c$ so that for every graph $G$
$${\mathbb E}(\chi(G_{1/2})) > c \frac{\chi(G)}{\log \chi(G)}\,?$$
\end{problem}

The methods in this paper led us to a solution of the fractional version of Problem \ref{prb:Bukh}, see \cite{MW_Bukh}.

To put the Erd\H{o}s-Hajnal conjecture in a bigger context, we cannot abide a related conjecture that was proposed in in 1983 by Carsten Thomassen.

\begin{conj}[Thomassen \cite{Th83}]
\label{conj:Thomassen}
For every positive integers $k$ and $g\ge3$, there exists a positive number $d(k,g)$ such that every graph with average degree at least $d(k,g)$ contains a subgraph of girth at least $g$ and average degree at least $k$.
\end{conj}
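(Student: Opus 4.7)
\ I would approach the conjecture via the alteration (probabilistic deletion) method. The preliminary reduction is standard: every graph of average degree $d$ contains a subgraph $H$ of minimum degree at least $d/2$ (iteratively remove vertices of degree below $d/2$). Working inside such an $H$ on $n$ vertices with minimum degree $\delta \ge d/2$, the task becomes to find an edge subset $F\subseteq E(H)$ whose deletion kills every cycle of length $<g$ and such that $H-F$ still has average degree at least $k$.

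\textbf{Random sparsification.}\ For a parameter $p\in(0,1)$, form $H_p$ by keeping each edge of $H$ independently with probability $p$. Then $\mathbb{E}[|E(H_p)|]=p|E(H)|$, while the expected number of cycles of length $\ell<g$ in $H_p$ is $p^\ell c_\ell(H)$, where $c_\ell(H)$ counts $\ell$-cycles of $H$. If $H$ were ``random-like'' in the sense that $c_\ell(H)=O(\delta^\ell n)$ for each $\ell<g$, then setting $p=c_0/\delta$ would give $\mathbb{E}[|E(H_p)|]=\Theta(c_0 n)$ and $\mathbb{E}\bigl[\sum_{\ell<g}c_\ell(H_p)\bigr]=O(n)$. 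Deleting one edge from each surviving short cycle would leave a subgraph of girth at least $g$ and average degree $\Omega(c_0)$, and choosing $c_0$ large enough as a function of $k$ makes this at least $k$.

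\textbf{Main obstacle.}\ The bound $c_\ell(H)=O(\delta^\ell n)$ fails spectacularly in general---for instance $K_n$ has $c_3=\Theta(n^3)\gg\delta^3 n$---so sparsification alone is insufficient. The heart of the proof would be a structural lemma asserting that every graph of average degree at least $D(k,g)$ contains a subgraph $H'$ of average degree still growing with $D$ in which the number of short cycles is evenly distributed, i.e.\ $c_\ell(H')\le C_\ell\,\bar d(H')^\ell\,|V(H')|$ for every $\ell<g$. I expect this to be the hard part: greedy removal of vertices lying in many short cycles can cascade and destroy the minimum degree. A natural strategy is an iterative refinement alternating between (i) deleting vertices with anomalously many short cycles through them and (ii) restoring minimum degree by further pruning, governed by a potential function (edges minus a weighted count of short cycles) that decreases strictly and stabilizes at a quasi-random subgraph, to which the sparsification argument above applies. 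This is essentially the route taken by K\"uhn and Osthus in their proof of Thomassen's conjecture, and I would model the details on their argument.
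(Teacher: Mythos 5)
This is Conjecture~\ref{conj:Thomassen}, which the paper states as an \emph{open} problem and does not prove; the authors write explicitly that it is ``still wide open,'' with K\"uhn and Osthus~\cite{KO} having settled only the cases $g\le 6$, and \cite{PRS94,DR11} giving further partial results under extra hypotheses on the maximum degree. Your closing sentence, which credits K\"uhn and Osthus with ``their proof of Thomassen's conjecture,'' is therefore incorrect: their result is the special case $g\le 6$, and the conjecture is unresolved for every $g\ge 7$.

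Beyond that misattribution, what you offer is not a proof but an articulation of the obstacle. The reduction to minimum degree $\ge d/2$ and the sparsify-and-delete step are routine and would indeed close the argument \emph{if} one had the ``structural lemma'' you posit --- a subgraph $H'$ of still-large average degree with $c_\ell(H')\le C_\ell\,\bar d(H')^\ell\,|V(H')|$ for all $\ell<g$ --- but that lemma is precisely the open content of Thomassen's conjecture; no such statement is known for general $g$, and you give no argument for it beyond a plausibility sketch (``iterative refinement with a potential function''). Even granting the lemma you would need explicit control of the constants $C_\ell$: after sparsifying so that $p\bar d\approx c_0$, the expected number of surviving short cycles is of order $\sum_{\ell<g}C_\ell c_0^\ell\,n$, which competes against only $\Theta(c_0 n)$ edges, and since $c_0$ must grow with $k$ an unquantified $O(\delta^\ell n)$ hypothesis does not by itself leave you with positive average degree after the deletions. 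In short, there is a genuine gap exactly where you flag one, and it is the gap that has kept the conjecture open since 1983.
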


Although this conjecture is still wide open, there are partial results.
K\"uhn and Osthus \cite{KO} proved the conjecture for $g\le6$. A different proof of the same result is given in Dellamonica et al. \cite{DKMR11}.
Pyber et al. \cite{PRS94} and Dellamonica and R\"odl \cite{DR11} proved the conjecture under additional assumption when the maximum degree is not too large in terms of the average degree.

Here we propose a conjecture that is a weakening of Conjectures \ref{conj:ErdosHajnal} and \ref{conj:Thomassen}.

\begin{conj}
\label{conj:ErdHajThomassen}
For every positive integers $k$ and $g\ge3$, there exists a positive number $c(k,g)$ such that every graph with chromatic number at least $c(k,g)$ contains a subgraph of girth at least $g$ and with average degree at least $k$.
\end{conj}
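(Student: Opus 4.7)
The plan is to reduce Conjecture \ref{conj:ErdHajThomassen} to the corresponding average-degree statement, namely Thomassen's Conjecture \ref{conj:Thomassen}. Given a graph $G$ with $\chi(G)\ge c$, one first passes to a critical subgraph by repeatedly deleting any vertex of degree less than $c-1$. Since a proper $(c-1)$-coloring of the resulting graph would extend to a $(c-1)$-coloring of $G$ by restoring the deleted vertices in reverse order, the remaining nonempty subgraph $G'$ has minimum degree, and hence average degree, at least $c-1$. Setting $c(k,g):=d(k,g)+1$, where $d(k,g)$ is the constant in Conjecture \ref{conj:Thomassen}, therefore makes Conjecture \ref{conj:ErdHajThomassen} an immediate consequence of Thomassen's conjecture.

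In particular, the result becomes unconditional whenever Thomassen's conjecture is known. Using the theorem of K\"uhn and Osthus \cite{KO}, one obtains Conjecture \ref{conj:ErdHajThomassen} outright for $g\le 6$. For $g=4$ a cleaner unconditional argument is available by first applying R\"odl's Theorem \ref{thm:Rodl} to extract a triangle-free subgraph of large chromatic number and then using the minimum-degree reduction above to pass to a subgraph of large average degree.

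The real obstacle lies in the regime $g\ge 7$, where Thomassen's conjecture itself is wide open. A natural attempt is to exploit the stronger chromatic hypothesis (rather than merely high average degree) via a probabilistic destruction of short cycles: after reducing to high minimum degree, take a random subgraph $G_p$, bound the expected number of cycles of length less than $g$, and delete one edge from each while controlling the loss in average degree. Making such a scheme work, however, runs into the same difficulty that leaves both Thomassen's conjecture for large $g$ and the high-girth cases of the Erd\H{o}s--Hajnal conjecture open: once short cycles are destroyed, one has no uniform bound on the average degree of the cycle-free residue in terms of $\chi(G)$ alone, and no evident structural feature of $G$ beyond its chromatic number to compensate. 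I expect that any genuine progress here will require new ideas analogous to (but beyond) the Rödl-type sparsification used in Theorem \ref{thm:main}, which handles only $g=4$.
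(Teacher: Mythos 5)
The statement you were asked about is a conjecture, not a theorem: the paper explicitly proposes Conjecture~\ref{conj:ErdHajThomassen} as an open problem that is a common weakening of Conjectures~\ref{conj:ErdosHajnal} and~\ref{conj:Thomassen}, and supplies no proof. Your write-up correctly recognizes this. The reduction you give from Thomassen's Conjecture~\ref{conj:Thomassen} — pass to a critical subgraph of minimum degree at least $c-1$, then invoke the average-degree hypothesis — is exactly the standard argument showing the implication the authors have in mind when they call this a ``weakening,'' and your observations about the $g\le 6$ case (K\"uhn--Osthus) and the $g=4$ case (R\"odl plus the same degree reduction) are accurate partial results. For completeness you might also have spelled out the equally easy reduction from Conjecture~\ref{conj:ErdosHajnal}: a subgraph with $\chi\ge k+1$ contains a subgraph of minimum degree at least $k$, so $c(k,g)=h(k+1,g)$ also works; but this is implicit in your $g=4$ remark. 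Your honest assessment that the general $g\ge 7$ case remains open, and that the random-sparsification sketch you outline runs into the same obstruction as Thomassen's conjecture itself, is the right conclusion — there is nothing in the paper for you to have missed here.
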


While Thomassen's conjecture is trivial when we replace the girth condition by requesting that the \emph{odd girth} is at least $g$ (meaning that the graph has no odd cycles of length less than $g$), the Erd\H{o}s-Hajnal conjecture for odd girth may be of interest.

\begin{conj}
\label{conj:oddgirth}
For every positive integers $k$ and $g$, there exists an integer $h(k,g)$ such that every graph $G$ with $\chi(G)\ge h(k,g)$ contains a subgraph with chromatic number at least $k$ and odd girth at least $g$.
\end{conj}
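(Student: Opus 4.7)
The plan is to proceed by induction on $g$. Since ``odd girth at least $g$'' for even $g$ is equivalent to the same condition with $g$ replaced by $g+1$, I may assume throughout that $g$ is odd. The base case $g=3$ is vacuous ($h(k,3)=k$), and the case $g=5$ is exactly R\"odl's Theorem~\ref{thm:Rodl}, since a graph has odd girth at least $5$ if and only if it is triangle-free. The inductive step must therefore pass from odd girth $\ge g$ to odd girth $\ge g+2$ for every odd $g\ge 5$.

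For the inductive step, given $G$ with $\chi(G)\ge h(m,g)$ for a sufficiently large parameter $m=m(k,g)$ to be chosen, the induction hypothesis provides a subgraph $H_0\subseteq G$ with $\chi(H_0)\ge m$ whose only short odd cycles have length exactly $g$. The remaining problem, which I shall call \emph{$C_g$-elimination}, is the following: given a graph of odd girth $\ge g$ with large chromatic number, find a subgraph of chromatic number at least $k$ that contains no $g$-cycle. This is precisely the single restricted setting in which one pushes the odd girth up by two, and it is the heart of the argument.

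I expect the $C_g$-elimination step to be the chief obstacle. My plan is to try to adapt R\"odl's triangle-free argument. For each vertex $v\in V(H_0)$, let $P_v$ denote the set of paths of length $g-2$ in $H_0-v$ whose endpoints are neighbours of $v$; each such path together with $v$ forms a $g$-cycle through $v$, and every $g$-cycle arises this way. A $C_g$-elimination thus amounts to choosing an edge-set $E'\subseteq E(H_0)$ that meets every path in $\bigcup_v P_v$ and whose deletion reduces the chromatic number by at most a factor depending only on $k$ and $g$. For $g=3$ the paths are single edges in $H_0[N(v)]$, which R\"odl controls by an averaging argument over the colour classes of an optimal colouring. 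For $g\ge 5$ the paths are longer, the families $P_v$ overlap across many $v$'s, and the obvious averaging is too wasteful. Making this work would require a new deletion procedure that simultaneously hits every member of $\bigcup_v P_v$ and leaves, for some colour class of an optimal colouring, a subgraph whose chromatic number is still comparable to $\chi(H_0)$.

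An alternative, more speculative avenue is to combine Theorem~\ref{thm:main} of the present paper with a homomorphism trick. Theorem~\ref{thm:main} already produces triangle-free subgraphs of large fractional chromatic number, and one could hope that random projections of such subgraphs onto graphs of prescribed odd girth yield the required subgraph. The stumbling block is that $\chi_f$ and $\chi$ can differ by an unbounded multiplicative factor---indeed, the Kneser graphs discussed in Section~\ref{sect:blow-up} are precisely the extremal examples---so such an argument cannot be unconditional and would need to exploit finer structural information about the intermediate subgraph.
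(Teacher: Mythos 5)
There is a genuine gap, and it is the whole problem: the statement you are addressing is posed in the paper as Conjecture~\ref{conj:oddgirth}, an open problem the authors raise (alongside Conjecture~\ref{conj:1}); the paper contains no proof of it, and your proposal does not contain one either. Your framing is reasonable --- reducing to odd $g$, handling $g\le 5$ via Theorem~\ref{thm:Rodl}, and isolating the inductive step of raising the odd girth from $g$ to $g+2$ --- but that inductive step (your ``$C_g$-elimination'') is precisely the entire content of the conjecture beyond R\"odl's theorem, and you explicitly leave it unresolved. A sketch that names the main obstacle and says it ``would require a new deletion procedure'' is a research plan, not a proof.

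It is worth being concrete about why the adaptation of R\"odl you propose does not go through as stated. R\"odl's argument, via Lemma~\ref{R:LNC}, exploits the fact that a triangle through $v$ lies entirely inside the neighbourhood of $v$: bounding $\chi(N^L(v))$ for every $v$ is a \emph{local} hypothesis that both controls all triangles and interacts cleanly with the chromatic number of the whole graph (if no such bound holds, one descends into a left-neighbourhood of still large chromatic number). For $g\ge 5$ your families $P_v$ consist of paths of length $g-2$ that are not confined to any bounded-radius neighbourhood of $v$, they overlap heavily across different vertices, and there is no known local parameter playing the role of $\chi(N^L(v))$ whose boundedness would let one hit every such path while losing only a bounded factor in chromatic number; this is exactly why the Erd\H{o}s--Hajnal conjecture is open for every $g\ge 5$ (even in the odd-girth relaxation). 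Your second avenue, projecting the triangle-free subgraph furnished by Theorem~\ref{thm:main} onto graphs of prescribed odd girth, fails for the reason you yourself note: $\chi$ and $\chi_f$ can diverge arbitrarily, so large fractional chromatic number of the intermediate subgraph gives no lower bound on the chromatic number of anything mapped onto it. As it stands, the proposal correctly identifies the difficulty but proves nothing beyond the cases already covered by R\"odl's theorem.
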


\section{Proof of Theorem \ref{thm:main}}

The proof of Theorem \ref{thm:main} uses the tools presented in this section.

Let $\VEC v1n$ be a linear ordering of the vertices of a graph $G$, i.e. we consider $v_i<v_j$ whenever $i<j$. For a vertex $v$, let
$$
   N^L(v)=\{u\mid uv\in E(G), u<v\},
$$
that is, the set of the neighbors of $v$ that appear before $v$. R\"{o}dl's proof of Theorem \ref{thm:Rodl} is based on the following lemma (see \cite{Rodl}).

\begin{lem}\label{R:LNC}
If\/ $\chi(G)> k^t$, and $\chi(N^L(v))\le t$ for every $v\in V(G)$, then there exists a triangle-free subgraph $H$ of\/ $G$ with $\chi(H)> k$.
\end{lem}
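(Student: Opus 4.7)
The plan is to use the proper colorings of the back-neighborhoods to partition the edge set of $G$ into $t$ spanning subgraphs, each of which is automatically triangle-free, and then invoke a product-coloring bound to show that at least one of these $t$ subgraphs must have chromatic number greater than $k$.

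Concretely, for every vertex $v$ fix a proper coloring $f_v\colon N^L(v)\to\{1,\ldots,t\}$, which exists by the hypothesis $\chi(N^L(v))\le t$. For each $i\in\{1,\ldots,t\}$ define
$$
E_i=\{uv\in E(G)\ :\ u<v\text{ and }f_v(u)=i\},
$$
and let $H_i=(V(G),E_i)$. By construction, $E(G)=E_1\cup\cdots\cup E_t$.

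The key observation is that every $H_i$ is triangle-free. Indeed, suppose $u<v<w$ form a triangle in $H_i$. Then $uv, uw, vw\in E_i$, so $f_w(u)=i=f_w(v)$; but $u,v\in N^L(w)$ are adjacent in $G$, contradicting the fact that $f_w$ is a proper coloring of $N^L(w)$.

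Finally, I would use the standard product-coloring bound: if $E(G)$ is the union of the edge sets of subgraphs $H_1,\ldots,H_t$ on the same vertex set, then $\chi(G)\le \prod_{i=1}^t \chi(H_i)$, obtained by taking the Cartesian product of proper colorings of each $H_i$. If every $H_i$ satisfied $\chi(H_i)\le k$, this would give $\chi(G)\le k^t$, contradicting the hypothesis. Hence some $H_i$ is a triangle-free subgraph with $\chi(H_i)>k$, as required.

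There is no real obstacle here once the edge-partition is defined correctly; the only conceptual step is noticing that coloring each back-neighborhood separately lets one label every edge $uv$ (with $u<v$) by the color of its earlier endpoint in the palette of $v$, and that this labeling kills triangles for free because the largest vertex of any triangle sees the other two as adjacent members of its own back-neighborhood.
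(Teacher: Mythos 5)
Your proof is correct and is essentially R\"odl's original argument, which the paper cites but does not reproduce (it only remarks that the original proof is ``elegant and very short''). Labeling each back-edge $uv$ (with $u<v$) by the color $f_v(u)$, noting that any triangle $u<v<w$ would force $f_w(u)=f_w(v)$ on two adjacent vertices of $N^L(w)$, and then applying the product-coloring bound $\chi(G)\le\prod_{i=1}^t\chi(H_i)$ is precisely the intended short proof.
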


The original proof of Lemma \ref{R:LNC} is elegant and very short, but it cannot be applied to the fractional chromatic number. Our major effort is to extend the above claim to the fractional chromatic number setup.

For a function $w: V(G)\to \mathbb{R}$, and a vertex-set $A$, we write $w(A) = \sum_{v\in A} w(v)$.
The \emph{fractional independence number}, denoted by $\alpha_f(G)$, is the minimum value of
\begin{equation}
   \max\{w(I)\mid I\in \mathcal{I}(G)\}
   \label{eq:fin}
\end{equation}
where the minimum of (\ref{eq:fin}) is taken over all non-negative weight functions $w$ with $w(V)=n$. By the linear programming duality we have the fact that $\chi_f(G)=\frac{n}{\alpha_f(G)}$, where $n=|V(G)|$. So we can consider our problem as a fractional independence number problem.

The following lemma is our statement on the fractional independence number analogous to Lemma~\ref{R:LNC}. It uses the following function
$$
   \di f(x,l) = x \left(\frac{\Gamma(xl^7+1)}{\Gamma(x+1)}\right)^3
$$
defined for real numbers $x\ge 1$ and $l\ge1$, where $\Gamma(\cdot)$ stands for the Euler gamma function. We also fix a \emph{weight function} $w:V(G)\to \mathbb{R}^+$ on the vertices of $G$, with the assumption that $w(V)\ne0$. The weight function $w$ defines a linear order $<$ on $V(G)$ such that $u<v$ implies that $w(u)\ge w(v)$; that is, the vertices are ordered starting with those with largest weight.
In the proof of Theorem \ref{thm:main}, the function $w$ will be the one minimizing (\ref{eq:fin}) in the definition of the fractional independence number of $G$.

\begin{lem}
\label{LNFC}
Let $x\ge59$ and $l\ge1$.
Suppose that\/ $w(I)\le \frac{w(V)}{f(x,l)}$ for every $I\in \mathcal{I}(G)$, and that $\chi_f(N^L(v))\allowbreak \le l$ for every $v\in V(G)$. Then $G$ contains a triangle-free spanning subgraph $H$, such that $w(I)\le \frac{w(V)}{x}$ for every $I\in \mathcal{I}(H)$.
In particular, $\chi_f(H) \ge x$.
\end{lem}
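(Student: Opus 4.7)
The plan is to mimic R\"odl's proof of Lemma \ref{R:LNC} probabilistically. His argument partitions the edges of $G$ into $l$ triangle-free pieces $G_1,\dots,G_l$ via proper $l$-colorings $\phi_v$ of $N^L(v)$ (placing $uv$ with $u<v$ into $G_{\phi_v(u)}$), one of which must then have large chromatic number. In the fractional setting no honest partition is available, so I would build $H$ randomly. For each vertex $v$, use the hypothesis $\chi_f(N^L(v))\le l$ to fix a fractional proper coloring $\{y^v_I\}_{I\in\mathcal{I}(N^L(v))}$ with $y^v_I\ge 0$, $\sum_I y^v_I = l$, and $\sum_{I\ni u} y^v_I\ge 1$ for every $u\in N^L(v)$. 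Independently across $v$, sample a random independent set $I^v\subseteq N^L(v)$ by $\Pr[I^v=I]=y^v_I/l$, and let $H$ be the spanning subgraph of $G$ with edge set $\{uv:u<v,\ u\in I^v\}$. If $uvw$ were a triangle in $H$ with $u<v<w$, then $u,v\in I^w$, contradicting the independence of $I^w$ in $G[N^L(w)]$; hence $H$ is triangle-free deterministically.

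For the weight bound, fix any candidate $S\subseteq V(G)$ and set $T_v:=S\cap N^L(v)$ for $v\in S$; the event $S\in\mathcal{I}(H)$ is exactly $\bigcap_{v\in S}\{I^v\cap T_v=\emptyset\}$, and these events are independent across $v$. A key observation is that restricting $\{y^v_I\}_I$ to independent sets meeting $T_v$ yields a fractional proper coloring of $G[T_v]$ (replace each $I$ by $I\cap T_v$), so
$$
\sum_{I:\,I\cap T_v\ne\emptyset} y^v_I \;\ge\; \chi_f(G[T_v]).
$$
Combining this with $1-z\le e^{-z}$ gives the per-set estimate
$$
\Pr[S\in\mathcal{I}(H)] \;\le\; \prod_{v\in S}\Bigl(1-\tfrac{1}{l}\sum_{I:\,I\cap T_v\ne\emptyset} y^v_I\Bigr) \;\le\; \exp\Bigl(-\tfrac{1}{l}\sum_{v\in S}\chi_f(G[T_v])\Bigr).
$$

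The hard part is to show that, with positive probability, no $S$ with $w(S)>w(V)/x$ is independent in $H$. A naive union bound is too weak: the number of heavy subsets of $V(G)$ can be exponential, and many $T_v$ may themselves be independent so that $\chi_f(G[T_v])=1$ contributes only a factor $1-1/l$ per vertex. The strong hypothesis $w(I)\le w(V)/f(x,l)$ must be leveraged twice---to bound the weighted count of heavy subsets on the one hand, and, via the LP identity $\chi_f(H)=w(V)/\alpha_{f,w}(H)$ applied inside induced subgraphs $G[T_v]$, to lower-bound $\sum_{v\in S}\chi_f(G[T_v])$ in structural terms on the other. The particular shape $f(x,l)=x\bigl(\Gamma(xl^7+1)/\Gamma(x+1)\bigr)^3$---a triple product of falling-factorial-type ratios with the exponent $l^7$---strongly suggests a three-phase decomposition of $S$, for instance by small/medium/large values of $\chi_f(G[T_v])$, in which each phase contributes exactly one factor of $\Gamma(xl^7+1)/\Gamma(x+1)$ either to an enumeration of heavy sets of its type or to the exponential decay displayed above. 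Balancing these three contributions against the corresponding factors of $f(x,l)$ should furnish a deterministic outcome with $w(I)\le w(V)/x$ for every $I\in\mathcal{I}(H)$, whence $\chi_f(H)\ge x$.
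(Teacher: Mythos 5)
Your setup is correct and closely parallels what the paper does on one piece of the graph: sampling, independently for each vertex $v$, an independent set $I^v$ from a fractional coloring of $N^L(v)$ and joining $v$ to $I^v$ does produce a deterministically triangle-free $H$, the independence-event factorization over $v\in S$ is valid, and the per-set estimate via $\chi_f(G[T_v])$ is a true inequality. But you have stopped exactly at the point where the real proof begins, and the direction you sketch for closing the gap is not the right one. The triple product and the exponent $l^7$ in $f(x,l)$ do not encode a ``three-phase decomposition'' of $S$; they are chosen so that the single recursive inequality $f(x,l)\ge x(x+1)^2\,f\bigl(x+1,\,l(1-\tfrac{1}{6(x+1)})\bigr)$ holds, which is verified by checking $(x+1)(1-\tfrac{1}{6(x+1)})^7\le x$. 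The lemma is proved by \emph{induction}, passing from $(x,l)$ to $(x+1,\,l(1-\tfrac{1}{6(x+1)}))$, not by a one-shot probabilistic argument.

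The machinery you are missing, and which cannot be replaced by tightening the union bound, is threefold. First, the notion of an $(x,l)$-\emph{reducible} set: a set $A$ with $w(A)\ge w(V)/(x(x+1)^2)$ on which every back-neighborhood $L_A(v)$ has fractional chromatic number at most $l(1-\tfrac{1}{6(x+1)})$. The paper strips off a maximal disjoint family $\R$ of such sets, handles each by the inductive hypothesis (Lemma~\ref{LEM:RS}), and only runs the random construction on $\overline R=V\setminus R$. Second, the principal/sparse dichotomy (Claim~\ref{CLM:sparseislight}): the union bound is applied only to \emph{dense} sets, which are $(x+1)$-principal, and there are at most $\binom{\lfloor k(x+1)\rfloor}{k}<(e(x+1))^k$ of those of size $k$ --- this, not a sharpened per-set estimate, is what tames the cardinality of the union. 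Third, the type~1/type~2 classification of vertices, which replaces your bound via $\chi_f(G[T_v])$ by something usable: a type~1 vertex of $A$ has miss-probability at most $\tfrac{1}{6(x+1)}$ by definition, independent of how small $\chi_f(G[T_v])$ might be, and a set with more than a $\tfrac{1}{x+1}$-fraction of type~2 vertices cannot be heavy, since its type~2 vertices would constitute a reducible subset of $\overline R$, a contradiction. Your worry that $\chi_f(G[T_v])=1$ gives only $e^{-1/l}$ per vertex is precisely why the crude estimate has to be abandoned; the paper's type~1/type~2 split and the reducibility argument are not cosmetic refinements but the actual content of the proof, and none of it appears in the proposal.
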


\begin{proof}
If $1\le l < 2$, then $G$ is triangle-free. Namely, if $G$ would contain a triangle $uvz$ with $u<v<z$, then $uv$ would be an edge in $N^L(z)$, contradicting the assumption that $\chi_f(N^L(z))\le l < 2$. Moreover, the condition that $w(I)\le w(V)/f(x,l)$ for every $I\in {\mathcal I}(G)$ implies that $\chi_f(G)\ge f(x,l)\ge f(x,1)=x$. Thus we can take $H=G$ if $1\le l < 2$.

For $l\ge2$, Lemma \ref{LNFC} is proved by induction together with Lemma \ref{LEM:RS}. The latter one uses Lemma \ref{LNFC} in its proof with $l' = l(1-\frac{1}{6(x+1)})$ playing the role of $l$ and $x'=x+1$ playing the role of $x$. Then Lemma \ref{LNFC} is proved for $l$ and $x$ by applying the other lemma. In order to see that this inductive proof scheme works, we need to argue that by applying the transformation from $(l,x)$ to $(l',x+1)$ consecutively a finite number of times, we eventually obtain the value $l'$ which is between 1 and 2.
To see this, let $l_0=l$ and for $i\ge1$, let $l_i = l_{i-1}(1-\frac{1}{6(x+i)})$ be the value of $l'$ after $i$ steps, $i=1,2,\dots$.  The process stops if $l_i$ becomes smaller than 2.
Observe that whenever this happens, $l_{i-1}\ge2$ and thus $l_i \ge 2(1-\frac{1}{6(x+1)}) > 1$.
Moreover, note that $l_i \le l_{i-1} - \frac{2}{6(x+i)}$ as long as $l_{i-1}\ge2$. Thus,
$$
   l_i \le l_0 - \frac{1}{3}\Bigl(\frac{1}{x+1} + \frac{1}{x+2} + \cdots \frac{1}{x+i}\Bigr).
$$
Since the harmonic series diverges, the value of $l_i$ eventually becomes smaller than 2.
This justifies the inductive scheme of our proof. The rest of the proof of Lemma \ref{LNFC} is the inductive step proved for any $l\ge2$; that part of the proof is given after the proof of Lemma \ref{LEM:RS}.

Let $\VEC v1n$ be the enumeration of the vertices in the non-increasing order according to the weight function $w$. Given a vertex-set $A$, let $A_k$ be the set of the first $k$ elements in $A$ according to this ordering. We extend this notion to all positive real numbers by setting
$A_s := A_{\lfloor s\rfloor}$.

For $s\in {\mathbb R}^+$, a nonempty subset $X$ of a vertex-set $Y$ is said to be \emph{$s$-principal\/} in $Y$ if $X\subseteq Y_{s|X|}$. That is, if $X$ has size $m$, then all elements of $X$ are within the first $\lfloor sm\rfloor$ vertices in $Y$. On the other hand, a subset $X$ of $Y$ is \emph{$s$-sparse} in $Y$ if $X$ contains no $s$-principal subset in $Y$. When the hosting set $Y$ for $s$-principal or $s$-sparse is not specified, by default it is the whole vertex-set $V$.

The next claim from \cite{MW} about the total weight of an $s$-sparse set will be essential for us. We include the proof for completeness.

\begin{clm}\label{CLM:sparseislight}
Let $s>0$ be a real number.
If $X$ is an $s$-sparse subset of\/ $Y$, then $w(X)\le \frac{1}{s}\,w(Y)$.
\end{clm}

\begin{proof}
Let $\VEC y1r$ be the non-decreasing order of the elements of $Y$ with $r=|Y|$, and let $\VEC x1m$ be the ordering of $X$ with $m=|X|$. Since $X$ is an $s$-sparse subset of $Y$, we have $x_i\not\in Y_{si}$. Hence for $1\le i\le m$, $w(x_i)\le w(y_j)$ if $1\le j\le si$. Moreover, since $x_i\in Y\setminus Y_{si}$, we also have $w(x_i)\le w(y_j)$ for $j=\lceil si\rceil$.

For a real parameter $z\in(0,|Y|]$, define $f(z)=y_{\lceil z\rceil}$. Then $f(z)\ge w(x_1)$ for $0<z\le s$, $f(z)\ge w(x_2)$ for $s<z\le 2s$, \dots, $f(z)\ge w(x_m)$ for $(m-1)s < z \le ms$.
Therefore,
$$
    s\,w(X) = s\sum_{i=1}^m w(x_i) \le \int_0^{sm} f(z)dz \le \sum_{j=1}^{\lceil sm\rceil} w(y_j) \le w(Y),
$$
which gives what we were aiming to prove.
\end{proof}

Let $A\subseteq V$ be a vertex-set and $v\in A$. Let $L_A(v)$ be the subgraph of $G$ induced by the neighbors of $v$ in $A$ that appear before $v$.
The set $A$ is \emph{$(x,l)$-reducible} if it satisfies the following conditions:
\begin{enumerate}
\item[(i)] $w(A)\ge \frac{w(V)}{x(x+1)^2}$ \ and
\item[(ii)] $\chi_f(L_A(v))\le l(1-\frac{1}{6(x+1)})$ \ for every $v\in A$.
\end{enumerate}

\begin{lem}\label{LEM:RS}
Suppose that $x\ge3$ and $l\ge2$ are real numbers.
Suppose that Lemma \ref{LNFC} holds for $x'=x+1$ and $l' = l-\frac{l}{6(x+1)}$.
Suppose that every independent set $I$ in $G$ satisfies $w(I)\le w(V)/f(x,l)$.
For any $(x,l)$-reducible set $A$, there is a triangle-free subgraph $H_A$ with $V(H_A)=A$ such that any independent set of $H_A$ has weight at most $\frac{w(A)}{x+1}$.
\end{lem}

\begin{proof}
Note that $f(x,1)=x$. Moreover, $f(x, l)$ satisfies the following recursive bound:
\begin{equation}
  f(x,l)\ge x(x+1)^2 f(x',l').
  \label{eq:f(x,l)}
\end{equation}
A short calculation combined with the fact that the gamma function $\Gamma(t)$ is increasing for $t\ge 2$ shows that (\ref{eq:f(x,l)}) holds if and only if $(x+1)(1-\tfrac{1}{6(x+1)})^7 \le x$.
By using elementary calculus, it is easy to verify that this inequality holds for every $x\ge3$. This confirms (\ref{eq:f(x,l)}).

For any independent set $I\subseteq A$, we have
$$
  w(I)\le \frac{w(V)}{f(x,l)}\le \frac{w(A)}{\frac{f(x,l)}{x(x+1)^2}}\le \frac{w(A)}{f(x',l')}.
$$
By our assumption, Lemma \ref{LNFC} holds for $x'$ and $l'$. Therefore, there is a triangle-free subgraph $H_A$ with $V(H_A)=A$ such that any independent set in $H_A$ has weight at most $\frac{w(A)}{x+1}$.
\end{proof}

In what follows, we will prove Lemma \ref{LNFC} with the help of Lemma~\ref{LEM:RS}. By our inductive scheme described before, we may assume that Lemma \ref{LNFC} holds for $x'$ and $l'$, and thus the conclusion of Lemma \ref{LEM:RS} can be used.

Let $\R$ be a maximal collection of pairwise disjoint $(x,l)$-reducible sets and let $R=\cup\R$ be the union of all these sets. Then the complement $\overline{R} = V\setminus R$ contains no reducible subsets. Applying Lemma~\ref{LEM:RS} for each $A\in \R$, we can find a triangle-free spanning subgraph $H_A$ of $G(A)$, such that for any independent set $I\subset V(H_A) = A$, we have $w(I)\le \frac{w(A)}{x+1}$. We let $H_0 = \cup_{A\in\R} H_A$. Then every $I\in {\mathcal I}(H_0)$ is disjoint union of independent sets in subgraphs $H_A$ ($A\in\R$) and thus
\begin{equation}
  w(I)\le \sum_{A\in\R} \frac{w(A)}{x+1} = \frac{w(R)}{x+1}.
  \label{eq:all reducible}
\end{equation}

Let $L_G(v)$ denote the graph $L_{V(G)}(v)$. By the assumption of Lemma \ref{LNFC}, we have $t = t(v) := \chi_f(L_G(v))\le l$. Let $\mathcal{I}(v)$ be the collection of independent sets of $L_G(v)$. There exists a weight function $u:\mathcal{I}(v)\to [0,1]$, such that any vertex in $L_G(v)$ is covered by independent sets with total weight at least $1$, and total weight of $\mathcal{I}(v)$ is $t$. For a set $A$ containing $v$, we say $v$ is \emph{type $1$ in} $A$ if the total $u$-weight of those sets in $\mathcal{I}(v)$ that are disjoint from $A$ is at most $\frac{t}{6(x+1)}$; otherwise $v$ is \emph{type $2$ in} $A$. Let $T_1(A)$ be the collection of type 1 vertices in $A$ and $T_2(A)$ be the collection of type 2 vertices in $A$.

A nonempty vertex-set $A\subseteq \overline{R}$ is said to be \emph{dense} if $A$ is $(x+1)$-principal in $\overline{R}$ and $|T_2(A)|\le \frac{|A|}{x+1}$.

\begin{lem}
\label{lem:no dense subset}
$G$ contains a triangle-free spanning subgraph $H$ such that no dense subset of $\overline{R}$ is independent in $H$.
\end{lem}

\begin{proof}
We start by taking the subgraph $H_0$ with vertex-set $R$ defined above. Then we define a subgraph $H$
by adding the vertices of $\overline{R}$ and some edges from these vertices to the rest of the graph by using the following random choice.
For each vertex $v\in \overline{R}$, randomly pick an independent set $I$ from $\mathcal{I}(v)$ according to their weight $u$, and then add the edges between $I$ and $v$ to $H$.
Now, the lemma follows from Claims \ref{clm:3} and \ref{clm:4} that are proved below.
\end{proof}

\begin{clm}
\label{clm:3}
$H$ is a triangle-free subgraph of $G$.
\end{clm}

\begin{proof}
Suppose there is a triangle with vertices $v_i, v_j, v_k$. Since $H_0$ is triangle-free, not all of these vertices are in $R$. Let $v_r\in \{v_i, v_j, v_k\}\cap \overline{R}$ be the one with $r$ largest possible. Then the two edges from $v_r$ to the other two vertices in $\{v_i, v_j, v_k\}$ have been added by the random choice at $v_r$. However, $v_r$ was joined to an independent set, and thus the other two vertices cannot be adjacent. This is in contradiction to $v_iv_jv_k$ being a triangle in $H$.
\end{proof}

\begin{clm}
\label{clm:4}
The probability that every dense subset of $\overline{R}$ contains an edge in $H$ is positive.
\end{clm}

\begin{proof}
For a dense subset $A\subseteq \overline{R}$ of cardinality $k$, at least $k - \frac{k}{x+1} = \frac{kx}{x+1}$ vertices of $A$ are type 1. For each type 1 vertex $v$ of $A$, the total weight of sets in ${\mathcal I}(v)$ is $t=t(v)\le l$, and the total weight of those that are out of $A$ is at most $\frac{t}{6(x+1)}$. Therefore, the selected independent set $I$ from ${\mathcal I}(v)$ has no vertices in $A$ with probability at most $\frac{1}{6(x+1)}$. Therefore, the probability that $A$ is an independent set in $H$ is at most $(\frac{1}{6(x+1)})^{\frac{kx}{x+1}}$.

As every dense subset of $\overline{R}$ is $(x+1)$-principal, there are at most ${\lfloor k(x+1)\rfloor \choose k}$ dense sets of order $k$. By Stirling's Formula, $k!\ge \sqrt{2\pi k}\,(\frac ke)^k > (\frac ke)^k$. Thus, we have
$${\lfloor k(x+1)\rfloor \choose k}< \frac{(k(x+1))^k}{k!} < \frac{(k(x+1))^k}{(\frac ke)^k} = e^k(x+1)^k.$$
Therefore, the probability that some dense set of order $k$ is an independent set in $H$ is less than
\begin{equation}
   e^k(x+1)^k\Bigl(\frac{1}{6(x+1)}\Bigr)^{\frac{kx}{x+1}} =
   \Bigl(e\,6^{-\frac {x}{x+1}}(x+1)^{\frac{1}{x+1}}\Bigr)^k.
\label{eq:dense independent}
\end{equation}
If $x\ge 59$, then $e\cdot 6^{-\frac {x}{x+1}} (x+1)^{\frac{1}{x+1}} < 1/2$, and thus
the right-hand side in (\ref{eq:dense independent}) is less than $2^{-k}$.
So we have the probability that some dense set is an independent set in $H$ is less than $\sum_{k\ge1} 2^{-k}<1$. With positive probability, every dense subset of $\overline{R}$ contains an edge in $H$.
\end{proof}

\begin{lem}
\label{lem:9}
If a set $S\subseteq \overline{R}$ contains no dense subset, then
$$w(S)\le \frac{w(\overline{R})}{x+1}+\frac{w(V)}{x(x+1)}.$$
\end{lem}

\begin{proof}
Let $S = \{s_1,s_2,\dots,s_{|S|}\}$, where the enumeration is consistent with the weight $w$, i.e. $w(s_i)\ge w(s_j)$ whenever $i\le j$.
For $1\le k\le |S|$, we have $S_k = \{s_1,\dots,s_k\}$ is not dense. This means that either $S_k$ is not $(x+1)$-principal in $\overline{R}$, or $|T_2(S_k)|>\frac{|S_k|}{x+1}$ (and $S_k$ is $(x+1)$-principal).

Let $S'=\{s_k \mid S_k \mbox{ is not $(x+1)$-principal in } \overline{R} \}$.
By the definition of sparse sets, it is easy to see that $S'$ is $(x+1)$-sparse in $\overline{R}$. By Claim~\ref{CLM:sparseislight}, we have $w(S')\le \frac{1}{x+1}w(\overline{R})$.

Let $S''=S\setminus S'=\{s_k \mid S_k \mbox{ is $(x+1)$-principal in } \overline{R} \}$. We can enumerate the elements in $S''$ as $s_{i_1},s_{i_2},s_{i_3},\dots$, where $1\le i_1 < i_2 < i_3 < \cdots$.
If $S_k$ is $(x+1)$-principal in $\overline{R}$, since $S$ does not contain any dense set and by the definition of dense set, we have $|T_2(S)\cap S_k|=|T_2(S_k)|> \frac{|S_k|}{x+1}=\frac{k}{x+1}$. This implies that $T_2(S)$ contains elements $s_{r_1},s_{r_2},s_{r_3},\dots$, where $1\le r_1 < r_2 < r_3 < \cdots$ such that $r_1\le i_1$, $r_2\le i_{\lceil x+1\rceil}$, $r_3\le i_{\lceil 2(x+1)\rceil}$, etc. As a consequence we obtain the following:
\begin{eqnarray*}
  (x+1)w(s_{r_1}) &\ge& \sum_{r=1}^{x+1} w(s_{i_r}), \\
  (x+1)w(s_{r_2}) &\ge& \sum_{r=x+2}^{2(x+1)} w(s_{i_r}),
\end{eqnarray*}
etc. (The sums in these inequalities are to be understood in the same way as in the proof of Claim \ref{CLM:sparseislight}, where we used integration in order to be precise: If the value $r$ of the upper (or lower) bound in the summation is not an integer, then we add partial value of the corresponding weight
$w(s_{i_{\lceil r\rceil}})$ ($w(s_{i_{\lfloor r\rfloor}})$) proportional to the distance from the ``floor'' (the ``ceiling'') of the value.)
By summing up these inequalities, we get $(x+1) w(T_2(S)) \ge w(S'')$.

As $T_2(S)$ is not reducible, we have one of the following outcomes: either $w(T_2(S)) < \frac{w(V)}{x(x+1)^2}$ or $\chi_f(L_{T_2(S)}(v)) > l(1-\frac{1}{6(x+1)})$ for some $v\in T_2(S)$. We claim that the second outcome contradicts our assumptions. By the definition of
$T_2(S)$, for each $v\in T_2(S)$, the total weight of independent sets in $\mathcal{I}(v)$ that are disjoint from $S$ is at least $\frac{t}{6(x+1)}$ (where $t=t(v)\le l$), the other independent sets have total weight at most $t(1-\frac{1}{6(x+1)})$. This corresponds to a fractional coloring of $L_{T_2(S)}(v)$ with order $t(1-\frac{1}{6(x+1)}) \le l(1-\frac{1}{6(x+1)}) < \chi_f(L_{T_2(S)}(v))$, which is a contradiction. So we must have $w(T_2(S)) < \frac{w(V)}{x(x+1)^2}$.
Hence $w(S'')\le (x+1)w(T_2(S))<\frac{w(V)}{x(x+1)}$.

Now we have $w(S)=w(S') + w(S'') < \frac{w(\overline{R})}{x+1} + \frac{w(V)}{x(x+1)}.$
\end{proof}

Let $I$ be an independent set in $H$. By Lemma \ref{lem:no dense subset}, $I$ contains no dense subsets.
By (\ref{eq:all reducible}) and Lemma \ref{lem:9} we have
\begin{eqnarray*}
  w(I) &=& w(I\cap R)+w(I\cap \overline{R}) \\
       &\le& \frac{w(R)}{x+1}+\frac{w(\overline{R})}{x+1}+\frac{w(V)}{x(x+1)} \\
       &=& w(V)\Bigl(\frac 1{x+1}+\frac{1}{x(x+1)}\Bigr) = \frac{w(V)}{x}.
\end{eqnarray*}
This completes the proof of Lemma \ref{LNFC}.
\end{proof}

We are ready to give the proof of Theorem \ref{thm:main}.

\begin{proof}[Proof of Theorem \ref{thm:main}]
Clearly, we may assume that $x\ge59$ by setting $k(x)=k(59)$ for every $x<59$.

Let $r(x)$ be the smallest integer such that there exists a triangle-free graph $Q$ of order $r(x)$ with $\chi_f(Q)\ge x$.
(It follows by the known bounds on Ramsey numbers $R(3,t)$ \cite{Kim} and a result by Ajtai, Koml\'os, and Szemer\'edi \cite{AKS} that $r(x)=\Theta(x^2/\log x)$.)
Define $k_0(x)=x$ and set $k_t(x)=f(x,k_{t-1}(x))$ for $t=1,2,\dots,r(x)$. Finally, let $k(x)=k_{r(x)}(x)$.

Let $t=r(x)$ and let $G_t=G$ be a graph with $\chi_f(G)\ge k(x)=k_t(x)$. Consider the weight function $w_t$ which minimizes (\ref{eq:fin}) in the definition of the fractional independence number of $G_t$. If there is a vertex $v_t$ such that $\chi_f(N^L(v_t)) > k_{t-1}(x)$, then we consider the subgraph induced on $N^L(v_t)$ and decrease $t$ by 1. This new graph defines the new weight function and for the decreased value of $t$ we repeat the same test whether there is a vertex $v_t$ such that $\chi_f(N^L(v_t)) > k_{t-1}(x)$. If we decrease $t$ all the way down to 1, then the vertices $v_1,v_2,\dots,v_{r(x)}$ form a complete subgraph of $G$, and by the definition of $r(x)$, there is a triangle-free subgraph isomorphic to $Q$, whose fractional chromatic number is at least $x$. This gives the desired outcome of the theorem.

On the other hand, if the process stops at $t>1$, we have obtained a graph $G_t$ such that $\chi_f(G_t) \ge k_t(x)=f(x,k_{t-1}(x))$ and for every vertex $v$, we have $\chi_f(N^L(v_t)) \le k_{t-1}(x)$. By Lemma \ref{LNFC}, $G_t$ (and hence also $G$) contains a triangle-free subgraph $H$ with $\chi_f(H) \ge x$.
\end{proof}

\section{Blow-ups and Kneser graphs}
\label{sect:blow-up}

The fact that the Erd\H{o}s-Hajnal Conjecture is so resistant and the results of this paper open the question whether graphs with bounded fractional chromatic number (and large chromatic number) would still satisfy the conjecture. The most natural examples of such graphs are Kneser graphs. In this section we give the proof that the Erd\H{o}s-Hajnal Conjecture holds for them.

Let us recall that the vertex-set of the Kneser graph $\KG(n,k)$ consists of all $k$-sets of elements of $\{1,\dots,n\}$, and two such sets are adjacent if they are disjoint. It may be assumed that $k\le n/2$. It is known that $\chi(\KG(n,k)) = n-2k+2$ and that $\chi_f(\KG(n,k)) = n/k$.

Given a graph $H$, the \emph{blow-up} of $H$ with \emph{power} $m$, denoted by $H^{(m)}$, is the graph obtained from $H$ by replacing each vertex by an independent set of size $m$ (called the \emph{blow-up\/} of the vertex), and for each edge $xy$ in $H$, the two blow-ups of $x$ and $y$ form a complete bipartite graph $K_{m,m}$. The subgraph of $H^{(m)}$ replacing an edge $xy$ of $H$ is isomorphic to $K_{m,m}$ and will be referred to as the \emph{blow-up} of that edge.

We have the following statement.

\begin{thm}\label{THM:BUT}
Suppose $G$ is a graph with $\Delta(G)\le \Delta$ and $\chi(G)>x$.
Suppose that $m$ is an integer that is larger than $x(x\Delta)^{2g-4}$. Then there exists a subgraph $H$ of $G^{(m)}$ with girth more than $g$ and chromatic number more than~$x$.
\end{thm}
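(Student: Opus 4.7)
The plan is a probabilistic construction followed by an alteration step. For each edge $uv\in E(G)$, I replace the $K_{m,m}$ blow-up of $uv$ by a random bipartite graph $B_{uv}$ on $V_u\cup V_v$, obtained by including each of the $m^2$ possible edges independently with probability $p$ (to be tuned). Let $H_0=\bigcup_{uv\in E(G)} B_{uv}$, and modify $H_0$ by a few edge deletions to kill the remaining short cycles.

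Call $B_{uv}$ \emph{Ramsey-good} if every pair $A\subseteq V_u$, $B\subseteq V_v$ with $|A|,|B|\ge m/x$ spans at least one edge of $B_{uv}$. If every $B_{uv}$ is Ramsey-good then $\chi(H_0)>x$: indeed, given a putative proper $x$-coloring $c$ of $H_0$, pigeonhole yields a dominant color $\phi(v)\in[x]$ used at least $m/x$ times in each blow-up $V_v$; whenever $uv\in E(G)$ with $\phi(u)=\phi(v)$, the two large monochromatic sets in $V_u,V_v$ contradict Ramsey-goodness of $B_{uv}$, so $\phi$ would itself be a proper $x$-coloring of $G$, contradicting $\chi(G)>x$. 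A single bad rectangle has probability at most $\exp(-pm^2/x^2)$; union-bounding over the at most $4^m$ rectangles in each $B_{uv}$ and the at most $n\Delta/2$ edges of $G$ shows that $p=\Omega\bigl(x^2(m+\log(n\Delta))/m^2\bigr)$ suffices to make every $B_{uv}$ Ramsey-good with positive probability.

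For short cycles, any cycle of length $\ell$ in $G^{(m)}$ projects to a closed walk of length $\ell$ in $G$ (there are at most $n\Delta^{\ell-1}$ such walks), together with a choice of vertex in each blow-up visited (at most $m^\ell$ choices); each is present in $H_0$ with probability $p^\ell$. Hence the expected number of cycles of length $\ell$ in $H_0$ is at most $n(pm\Delta)^\ell/\Delta$. In the alteration step I strengthen Ramsey-goodness to ``every $m/x$-rectangle spans at least $2g$ edges'' (costing only an $O(g)$ factor in $p$, which the union bound absorbs), and then delete one edge from each short cycle. The resulting graph $H$ has girth greater than $g$; since every $m/x$-rectangle still contains a surviving edge, Ramsey-goodness is preserved and $\chi(H)>x$ follows from the argument of the previous paragraph.

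The main obstacle is parameter tuning: $p$ must be large enough to force Ramsey-goodness ($p=\Omega(x^2/m)$), yet small enough that the expected short-cycle count stays below the Ramsey slack. The hypothesis $m>x(x\Delta)^{2g-4}$ provides exactly the room needed for this trade-off. Matching the exponent $2g-4$ rather than the $g$ coming from the crude count above will be the most delicate part and calls for a sharper treatment of short cycles --- for example, stratifying them by the number of distinct vertices their projection visits in $G$ (so that heavily collapsed walks, which dominate the naive bound, are counted more efficiently), or else replacing the Bernoulli model by a uniformly random $d$-regular bipartite graph for each $B_{uv}$, so that $H_0$ has the clean maximum degree $d\Delta$ and a Moore-type count of walks of length up to $g$ takes over.
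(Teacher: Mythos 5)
Your setup is the same as the paper's: replace each $K_{m,m}$ blow-up by a random bipartite graph with edge-probability $p$, and reduce $\chi(H)>x$ to the ``Ramsey-good'' property that every $m/x \times m/x$ rectangle across a blown-up edge spans an edge (this is exactly Lemma~\ref{lem:claim4}, quoted in the paper from Bollob\'as--Sauer). The divergence is in the probabilistic machinery, and that is where the gap lies.

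\textbf{The $n$-dependence is fatal, not a tuning issue.} Your union bound for Ramsey-goodness already forces $p = \Omega\bigl(x^2(m+\log(n\Delta))/m^2\bigr)$, and your first-moment count of short cycles is $\le n(pm\Delta)^\ell/\Delta$ --- both explicitly carry the number of vertices $n=|V(G)|$. To make the expected short-cycle count manageable you would have to let $m$ grow with $n$, but the theorem asserts a threshold $m>x(x\Delta)^{2g-4}$ depending \emph{only} on $x$, $\Delta$, $g$. This is precisely the defect the paper attributes to the earlier constructions in \cite{BS} and \cite{ZHU} (``the bound for the blow-up power $m$ \ldots is too large for our purpose as it depends on the number of vertices of $G$ instead of the maximum degree''). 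The paper escapes it by applying the asymmetric Lov\'asz Local Lemma: the events ``cycle $C$ survives'' and ``rectangle $B$ is empty'' have bounded dependency degree in terms of $s x\Delta$ and $\binom{m}{s}$, so the LLL certifies a positive-probability outcome without any union bound over $V(G)$. A global first-moment/union-bound argument cannot reproduce this locality, so no amount of ``parameter tuning'' within your framework recovers a degree-only bound.

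\textbf{The alteration step is also unjustified.} Strengthening Ramsey-goodness to ``every rectangle spans at least $2g$ edges'' and then deleting one edge per short cycle does not guarantee a surviving edge in every rectangle: the number of short cycles whose chosen deleted edge lies inside a fixed rectangle is not bounded by $2g-1$; it depends on how many short cycles thread through that particular blow-up, which can be arbitrarily large. To make this rigorous you would need, for every rectangle simultaneously, concentration of the number of its edges lying on short cycles, i.e.\ another union bound over the $\approx n\Delta\cdot 4^m$ rectangles --- which reintroduces the $n$-dependence. The paper avoids an alteration step entirely: with the LLL it asks directly that no cycle of length $\le g$ appears \emph{and} no $K_{s,s}$ is empty, and verifies the LLL hypotheses with $p=s^{1/(4g)-1}$ and $s=m/x$. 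If you want a route to the theorem as stated, the LLL (or some other genuinely local tool such as entropy compression) is not optional.
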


There are a few existing papers, for example \cite{BS, ZHU}, in which a result similar to Theorem \ref{THM:BUT} was proved. (The corresponding results in \cite{BS, ZHU} were applied to a construction of uniquely colorable graphs of large girth.) But the bound for the blow-up power $m$ in \cite{BS} and \cite{ZHU} is too large for our purpose as it depends on the number of vertices of $G$ instead of the maximum degree.

To prove Theorem~\ref{THM:BUT}, we will need the following fact from \cite{BS}.

\begin{lem}\label{lem:claim4}
Given a graph $G$ with $\chi(G)>x$, let $H$ be a subgraph of $G^{(m)}$. Suppose that for any edge $ab\in E(G)$ and for any subsets $X, Y$ contained in the blow-ups of $a$ and $b$, respectively, with $|X|\ge\frac{m}{x}$, $|Y|\ge\frac{m}{x}$, there is an edge between $X$ and $Y$ in $H$. Then $\chi(H) > x$.
\end{lem}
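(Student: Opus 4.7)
I would prove this by contrapositive. Suppose for contradiction that $\chi(H)\le x$, and fix a proper coloring $c:V(H)\to\{1,\dots,x\}$. The aim is to exhibit an edge $ab\in E(G)$ and sets $X,Y$ of size at least $m/x$ inside the blow-ups of $a$ and $b$, respectively, that induce no edge in $H$, contradicting the hypothesis.

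For each $v\in V(G)$, let $B_v$ denote the blow-up of $v$, an independent set of $m$ vertices in $G^{(m)}$. The restriction of $c$ to $B_v$ partitions $B_v$ into at most $x$ color classes, so by pigeonhole some color $c'(v)\in\{1,\dots,x\}$ appears on at least $m/x$ vertices of $B_v$. Define $X_v\subseteq B_v$ to be such a largest monochromatic subset, so $|X_v|\ge m/x$ and all vertices of $X_v$ receive color $c'(v)$ under $c$.

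The assignment $c':V(G)\to\{1,\dots,x\}$ uses only $x$ colors, but $\chi(G)>x$ forces $c'$ to fail to be a proper coloring of $G$. Hence there is an edge $ab\in E(G)$ with $c'(a)=c'(b)$. Setting $X:=X_a$ and $Y:=X_b$, both sets have size at least $m/x$ and all their vertices share a single color of $c$. Since $c$ is a proper coloring of $H$, no edge of $H$ can join $X$ and $Y$. This contradicts the hypothesis of the lemma, and thus $\chi(H)>x$.

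There is no real obstacle here: the argument is a one-shot pigeonhole plus the definition of chromatic number applied to the induced ``majority color'' map $c'$ on $V(G)$. The only small point to be careful about is to phrase $\chi(H)\le x$ rather than $\chi(H)\le x-1$, so that the contradiction with $\chi(G)>x$ is obtained via strict inequality when $c'$ is forced to identify the colors on some edge.
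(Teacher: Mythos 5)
Your proof is correct. The paper does not reproduce a proof of this lemma (it is cited from Bollob\'as and Sauer \cite{BS}), but your argument—extract from a proper $x$-coloring of $H$ a majority color class $X_v$ of size at least $m/x$ in each blow-up $B_v$, note that the induced map $v\mapsto c'(v)$ uses only $x$ colors and so, since $\chi(G)>x$, must put the same color on the ends of some edge $ab$, and then take $X=X_a$, $Y=X_b$—is the standard and essentially forced route to this statement, so there is nothing to flag.
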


In the proof of Theorem~\ref{THM:BUT} we will take a random subgraph $H$ of $G^{(m)}$, obtained by selecting each edge independently with probability $(\frac{m}{x})^{\frac 1{4l}-1}$, and will prove that with positive probability $H$ has no short cycles, and for any edge $ab\in E(G)$ and for any pair $X, Y$ contained in the respective blow-ups of $a$ and $b$, and with $|X|\ge \frac{m}{x}$, $|Y|\ge\frac{m}{x}$, there is an edge between $X$ and $Y$ in $H$. To prove this, we will use the following asymmetric form of the Lov\'{a}sz Local Lemma.

\begin{thm}[Lov\'{a}sz Local Lemma]
\label{thm:LLL}
Let $\mathcal{A} = \{ A_1, \ldots, A_n \}$ be a finite set of events in the probability space $\Omega$. For $A \in \mathcal{A}$ let $\Gamma(A)$ denote a subset of $\mathcal{A}$ such that $A$ is independent from the collection of events $\mathcal{A} \setminus (\{A \} \cup \Gamma(A))$. If there exists an assignment of real numbers $y : \mathcal{A} \to (0,1)$ to the events such that
$$
  \forall A \in \mathcal{A} : \Pr(A) \leq y(A) \prod\nolimits_{B \in \Gamma(A)} (1-y(B))
$$
then the probability of avoiding all events in $\mathcal{A}$ is positive. In particular,
$$
  \Pr\left(\overline{A_1} \wedge \ldots \wedge \overline{A_n} \right) \geq \prod\nolimits_{A \in \mathcal{A}} (1-y(A)).
$$
\end{thm}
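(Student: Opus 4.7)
The plan is to prove the stronger auxiliary inequality
$$\Pr\Bigl(A \;\Big|\; \bigwedge_{B \in S} \overline{B}\Bigr) \;\leq\; y(A)$$
for every $A \in \mathcal{A}$ and every subset $S \subseteq \mathcal{A} \setminus \{A\}$, by induction on $|S|$. Once this is established, the conclusion of the theorem follows immediately by expanding
$$\Pr\Bigl(\overline{A_1}\wedge\cdots\wedge\overline{A_n}\Bigr) = \prod_{i=1}^{n} \Pr\Bigl(\overline{A_i} \;\Big|\; \overline{A_1}\wedge\cdots\wedge\overline{A_{i-1}}\Bigr) \;\geq\; \prod_{i=1}^{n}\bigl(1-y(A_i)\bigr),$$
which is positive since each $y(A_i) < 1$.

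The base case $S = \emptyset$ is trivial: the hypothesis gives $\Pr(A) \leq y(A)\prod_{B\in\Gamma(A)}(1-y(B)) \leq y(A)$. For the inductive step, I would split $S = S_1 \cup S_2$ where $S_1 = S \cap \Gamma(A)$ and $S_2 = S \setminus \Gamma(A)$. The case $S_1 = \emptyset$ is immediate from the independence assumption, since $A$ is then independent of the conjunction $\bigwedge_{B\in S_2}\overline{B}$. When $S_1 \neq \emptyset$, I would write
$$\Pr\Bigl(A \;\Big|\; \bigwedge_{B \in S}\overline{B}\Bigr) \;=\; \frac{\Pr\bigl(A \wedge \bigwedge_{B\in S_1}\overline{B} \;\big|\; \bigwedge_{B\in S_2}\overline{B}\bigr)}{\Pr\bigl(\bigwedge_{B\in S_1}\overline{B} \;\big|\; \bigwedge_{B\in S_2}\overline{B}\bigr)}$$
and bound numerator and denominator separately.

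The numerator is at most $\Pr(A \mid \bigwedge_{B\in S_2}\overline{B}) = \Pr(A) \leq y(A)\prod_{B\in\Gamma(A)}(1-y(B))$, using the independence of $A$ from $S_2$ and the hypothesis of the theorem. For the denominator, I would enumerate $S_1 = \{B_1,\dots,B_k\}$, apply the chain rule
$$\Pr\Bigl(\bigwedge_{i=1}^k \overline{B_i}\;\Big|\;\bigwedge_{B\in S_2}\overline{B}\Bigr) = \prod_{i=1}^{k}\Bigl(1 - \Pr\bigl(B_i\;\big|\;\overline{B_1}\wedge\cdots\wedge\overline{B_{i-1}}\wedge \bigwedge_{B\in S_2}\overline{B}\bigr)\Bigr),$$
and invoke the inductive hypothesis on each factor (legitimately, since the conditioning set has size $|S_2|+i-1 < |S|$) to get a lower bound $\prod_{B\in S_1}(1-y(B))$. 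Combining the two bounds and cancelling the factors indexed by $S_1 \subseteq \Gamma(A)$ yields $\Pr(A\mid\bigwedge_{B\in S}\overline{B}) \leq y(A)$, closing the induction.

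The main conceptual subtlety, and the only genuine obstacle, is to set up the induction on the correct quantity, namely $|S|$, rather than attempting a direct estimate on $\Pr(\bigwedge \overline{A_i})$. The splitting $S = S_1 \cup S_2$ is what lets us use the independence assumption on the $S_2$-part while keeping the inductive hypothesis available for the $S_1$-part; getting the denominator estimate right by a careful telescoping chain-rule argument is the technical heart. All other steps are routine manipulations with conditional probability, and no quantitative bookkeeping (e.g.\ on $\Gamma(A)$ beyond $S_1 \subseteq \Gamma(A)$) is required.
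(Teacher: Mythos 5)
Your proof is correct: it is the standard inductive proof of the asymmetric Lov\'asz Local Lemma, with the induction on $|S|$ establishing $\Pr\bigl(A\mid\bigwedge_{B\in S}\overline{B}\bigr)\le y(A)$, the split $S=S_1\cup S_2$ along $\Gamma(A)$, and the chain-rule lower bound $\prod_{B\in S_1}(1-y(B))$ on the denominator, after which the telescoping product gives the stated bound. Note that the paper states Theorem~\ref{thm:LLL} as a known classical result and supplies no proof of its own, so there is no in-paper argument to compare against; the only detail worth adding to your write-up is the routine check that every conditioning event $\bigwedge_{B\in S}\overline{B}$ has positive probability (so all conditional probabilities are well defined), which is obtained inductively from the same estimates since each factor $1-y(B)$ is strictly positive.
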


\begin{proof}[Proof of Theorem~\ref{THM:BUT}]
We can assume $x\not= 2$, since there is an odd cycle with length more than $g$ in $G^{(m)}$.

Let $s=\frac{m}{x}$, $\lambda=\frac 1{4g}$ and $p=s^{\lambda-1}$.
Let $H$ be a random subgraph of $G^{(m)}$ obtained by picking each edge in $G^{(m)}$ independently with probability $p$.

If $C$ is a cycle in $G^{(m)}$ of length at most $g$, let $A_C$ be the event that all edges of $C$ appear in $H$. Then $Pr(A_C)=p^{|C|}=s^{(\lambda-1)|C|}$.

Let $B$ be a copy of $K_{s,s}$ as a subgraph of a blow-up of an edge in $G$, let $A_B$ be the event that $H$ contains none of the edges of $B$. Then $Pr(A_B) = (1-p)^{s^2}\approx e^{-ps^2} = e^{-s^{1+\lambda}}$.

To prove that there exists a subgraph of $G^{(m)}$ with girth at least $g$ and chromatic number at least $x$, we will use Lemma~\ref{lem:claim4}. We just need to show
$$Pr\biggl(\biggl(\,\bigwedge_{|C|\le g}\overline{A_C}\biggr)\wedge\biggl(\,\bigwedge_{B \cong K_{s,s}}\overline{A_B}\biggr)\biggr)>0.$$
This will be confirmed by applying the asymmetric form of the Lov\'{a}sz Local Lemma (Theorem \ref{thm:LLL}) to the two types of events together.

Suppose that $C$ is a cycle of length $|C|\le g$. As the maximum degree in $G^{(m)}$ is at most $m\Delta =sx\Delta $, there are at most $|C|(sx\Delta)^{j-2}$ cycles of length $j$ in $G^{(m)}$ that share edges with $C$, and there are at most $|C|{m\choose s}^2$ copies of $K_{s,s}$ that share edges with $C$.

Suppose that $B$ is a copy of $K_{s,s}$. There are at most $s^2(sx\Delta)^{j-2}$ cycles of length $j$ in $G^{(m)}$ that share edges with $B$, and there are at most ${m\choose s}^2$ copies of $K_{s,s}$ that share edges with $B$.

For the Lov\'{a}sz Local Lemma, let $y_0=y(A_B)=e^{-0.5s^{1+\lambda}} \approx Pr(A_B)^{0.5}$, and for each cycle $C$ of length $|C|\le g$, let $y_{|C|}=y(A_C)=Pr(A_C)^{1-\lambda}=s^{-(1-\lambda)^2|C|}<s^{(2\lambda-1)|C|}$. We just need to show:

(1) $Pr(A_C)\le y_{|C|}\left(\prod_{3\le j\le g}(1-y_j)^{|C|(sx\Delta)^{j-2}}\right)(1-y_0)^{|C|{m\choose s}^2}$;

(2) $Pr(A_B)\le y_0\left(\prod_{3\le j\le g}(1-y_j)^{s^2(sx\Delta)^{j-2}}\right)(1-y_0)^{{m\choose s}^2}$.

Let us take the logarithm on each side of the above inequalities, and use the fact that $0.9\log (1-z) > -z$ (when $z$ is close to $0$ as it appears to be in our case when $z=y_0$ or $y_j$). After simplifying, we see that it suffices to verify the following inequalities:

(3) $\di 0.9\lambda(1-\lambda)\log s \ge \sum_{3\le j\le g} s^{(2\lambda-1)j}(sx\Delta)^{j-2} +
e^{-0.5s^{1+\lambda}}{sx\choose s}^2$.

(4) $\di 0.4s^{1+\lambda}\ge \sum_{3\le j\le g} s^{(2\lambda-1)j}s^2(sx\Delta)^{j-2}+e^{-0.5s^{1+\lambda}}{sx\choose s}^2$.

\smallskip
\noindent
In order to prove (4), we first observe that:
\begin{eqnarray*}
  \sum_{3\le j\le g} s^{(2\lambda-1)j}s^2(sx\Delta)^{j-2}
  &=& \sum_{3\le j\le g} s^{2\lambda j}(x\Delta)^{j-2} \\
  &<& 1.1s^{2\lambda g}(x\Delta)^{g-2} \\
  &=& 1.1s^{0.5}(x\Delta)^{g-2}.
\end{eqnarray*}
By the assumption of the theorem, $s > (x\Delta)^{2g-4}$.
Thus, $(x\Delta)^{g-2} < s^{0.5}$, and we have
$$\sum_{3\le j\le g} s^{(2\lambda-1)j}s^2(sx\Delta)^{j-2}<1.1s$$
Similarly, we also have
$\sum_{3\le j\le g} s^{(2\lambda-1)j}(sx\Delta)^{j-2}<1.1s^{-1}$, which will be used to prove (3).

As ${sx\choose s}^2 < (ex)^{2s}$, we have
$$e^{-0.5s^{1+\lambda}}{sx\choose s}^2 < (e^{-0.5s^{\lambda}}(ex)^2)^s=(e^{-0.5s^{\lambda}+2+2\log x})^s.$$
As $s>(x\Delta)^{2g-4}$, we have $s^{\lambda}>(x\Delta)^{\frac {2g-4}{4g}}>4(1+\log x)$, hence $e^{-0.5s^{1+\lambda}}{sx\choose s}^2<1.$

As $0.9\lambda(1-\lambda)\log s\ge 0.9(1-\lambda)\frac {2g-4}{4g}\log (x\Delta)\ge 1.1s^{-1}+1$ and $0.4s^{1+\lambda}\ge 1.1s+1$, we conclude that both inequalities (3) and (4) are true.

In summary, if $m=sx\ge x(x\Delta)^{2g-4}$, by the asymmetric form of the Lov\'{a}sz Local Lemma, the event that $H$ has no cycles of length at most $g$ and every $K_{s,s}$ as a subgraph of a blow-up of an edge has at least one edge, has positive probability. Hence, by Lemma~\ref{lem:claim4}, the corresponding subgraph $H$ of $G^{(m)}$ has chromatic number more than $x$.
\end{proof}

The following result from \cite[Theorem 3.3]{MW} shows that Kneser graphs contain blow-ups of smaller Kneser graphs with large power.

\begin{thm}\label{THM:KGBU}
Let $n,k,t$, and $x$ be nonnegative integers such that $0<k<n$ and $x<kt$.
The Kneser graph\/ $\KG(nt,kt-x)$ contains the blow-up of $\KG(n,k)$ with power ${k(t-1)\choose x}$ as a subgraph. Furthermore, when $x<t$, it contains the blow-up of $\KG(n,k)$ with power ${kt\choose x}$, and when $x=t$, it contains the blow-up of $\KG(n,k)$ with power ${kt\choose x}-k$.
\end{thm}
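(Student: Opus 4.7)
The plan is to exhibit the required blow-up explicitly by assigning, to each vertex $v \in V(\KG(n,k))$, a carefully chosen family $\mathcal{F}_v$ of $(kt-x)$-subsets of $[nt]$ that will serve as the blow-up of $v$. Identify $[nt] = [n] \times [t]$ and set $G_v = v \times [t]$, so that $|G_v| = kt$ and $G_v \cap G_w = (v \cap w) \times [t]$. I will always take $\mathcal{F}_v \subseteq \binom{G_v}{kt-x}$; with this restriction, whenever $v \cap w = \emptyset$, every $S \in \mathcal{F}_v$ is disjoint from every $T \in \mathcal{F}_w$, so every required blow-up edge is automatically a Kneser edge. All that remains is to make the families $\mathcal{F}_v$ pairwise disjoint (so that $v \mapsto \mathcal{F}_v$ extends to an injection on the vertex set of the blow-up), while maximizing the common cardinality $m = |\mathcal{F}_v|$.

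For the main bound $\binom{k(t-1)}{x}$, fix the ``diagonal'' $D_v := v \times \{1\}$ and set
$$
   \mathcal{F}_v = \bigl\{\, S \in \tbinom{G_v}{kt-x} \,:\, D_v \subseteq S \,\bigr\}.
$$
Each such $S$ is obtained by adjoining to $D_v$ a $(k(t-1)-x)$-subset of $G_v \setminus D_v$, so $|\mathcal{F}_v| = \binom{k(t-1)}{x}$. If $S \in \mathcal{F}_v \cap \mathcal{F}_w$, then $v \times \{1\} = D_v \subseteq S \subseteq G_w = w \times [t]$, which forces $v \subseteq w$ and hence $v = w$; this establishes pairwise disjointness, and the main statement follows.

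The two sharper bounds come from discarding the diagonal constraint whenever the geometry of $G_v \cap G_w$ already rules out collisions. When $x < t$, take $\mathcal{F}_v = \binom{G_v}{kt-x}$, of size $\binom{kt}{x}$: a coincidence $S \in \mathcal{F}_v \cap \mathcal{F}_w$ with $v \ne w$ would force $kt - x = |S| \le |G_v \cap G_w| \le (k-1)t$, contradicting $x < t$. When $x = t$ this inequality becomes tight exactly for pairs with $|v \cap w| = k-1$, in which case $G_v \cap G_w = (v \cap w) \times [t]$ is itself a single $(kt-t)$-subset. The offending subsets are therefore precisely those of the form $(v \setminus \{a\}) \times [t]$ for $a \in v$, of which each $G_v$ contains exactly $k$; deleting these from $\binom{G_v}{kt-t}$ simultaneously kills every collision across every pair $v \ne w$ and leaves $\binom{kt}{t} - k$ subsets per family.

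The main obstacle -- really the only subtle point -- is pinning down the exact source of collisions in the borderline case $x = t$, and verifying that the $k$ removed subsets are sharp enough to break all collisions globally rather than only pair-by-pair. The diagonal trick in the general case is the ``universal'' way to break all collisions, at the price of forcing the $k$ extra elements of $D_v$ inside every $S$, which is exactly what degrades the count from $\binom{kt}{x}$ to $\binom{k(t-1)}{x}$.
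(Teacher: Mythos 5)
Your construction is correct and complete. The paper itself does not prove Theorem~\ref{THM:KGBU}; it cites it from \cite[Theorem~3.3]{MW}, so there is no in-paper proof to compare against. That said, every step of your argument checks out: identifying $[nt]$ with $[n]\times[t]$ and sending a $k$-set $v$ to $(kt-x)$-subsets of $G_v=v\times[t]$ automatically gives disjointness across nonadjacent pairs since $G_v\cap G_w=(v\cap w)\times[t]$; the diagonal trick $D_v=v\times\{1\}$ correctly forces $v=w$ whenever $\mathcal F_v\cap\mathcal F_w\neq\emptyset$, and the count $\binom{k(t-1)}{kt-x-k}=\binom{k(t-1)}{x}$ is right; the collision bound $kt-x\le|G_v\cap G_w|\le(k-1)t$ gives the $x<t$ case immediately; and in the borderline case $x=t$ you correctly identify the only possible collisions as $S=(v\cap w)\times[t]$ with $|v\cap w|=k-1$, i.e.\ the $k$ sets $(v\setminus\{a\})\times[t]$ inside each $G_v$, whose removal kills all collisions at once and yields $\binom{kt}{t}-k$. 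This is a clean, self-contained proof of the cited theorem.
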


From Theorem~\ref{THM:BUT}, we know that graphs that are blow-ups of smaller graphs with sufficiently large power satisfy the Erd\H{o}s-Hajnal Conjecture. In particular, Kneser graphs are such examples. This can be used to derive the main result of this section.

\begin{cor}
\label{Cor:KGBU}
The Erd\H{o}s-Hajnal Conjecture holds for Kneser graphs.
\end{cor}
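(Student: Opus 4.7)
The plan is to combine Theorems~\ref{THM:BUT} and~\ref{THM:KGBU}. Given target $k,g\ge 3$, note that $K_k=\KG(k,1)$ has $\chi=k$ and $\Delta=k-1$, so Theorem~\ref{THM:BUT} yields a subgraph of $K_k^{(m)}$ of girth exceeding $g$ and chromatic number at least $k$ provided $m>m^*:=(k-1)^{4g-7}$. It therefore suffices to embed $K_k^{(m)}$, or more generally a blow-up $\KG(n,k')^{(m)}$ with $\chi(\KG(n,k'))\ge k$, into any Kneser graph $\KG(N,K)$ whose chromatic number $N-2K+2$ exceeds a threshold $h(k,g)$ depending only on $k$ and $g$.

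Theorem~\ref{THM:KGBU} provides the embedding. Writing $s:=N-2K$ and taking $n=2k'+k-2$ so that $\chi(\KG(n,k'))=k$, the Kneser graph $\KG(nt,k't-x)$ contains $\KG(n,k')^{(\binom{k'(t-1)}{x})}$; it embeds as a sub-Kneser graph of $\KG(N,K)$ provided $K=k't-x$ and $nt\le N$, the latter simplifying to $2x+(k-2)t\le s$. The task reduces to producing, for every $(N,K)$ with $s$ above a uniform threshold, integers $(k',t,x)$ satisfying these constraints together with $\binom{k'(t-1)}{x}>m^*(k')$, where $m^*(k')=(k-1)((k-1)\binom{k'+k-2}{k-2})^{2g-4}$ is the corresponding threshold from Theorem~\ref{THM:BUT}. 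I handle $K$ in two overlapping regimes.

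\emph{Small $K$.} If $K\le s/(k-2)$, take $k'=1$ (so the base is $K_k$) and $t=\lfloor N/k\rfloor$, giving $x=t-K<t$ and blow-up power $\binom{t}{K}$ via the stronger form of Theorem~\ref{THM:KGBU}. The worst case is $K=1$: there $\KG(N,1)=K_N$ contains $K_k^{(\lfloor N/k\rfloor)}$ directly, so it suffices that $N\ge km^*$, forcing $h(k,g)\ge k(k-1)^{4g-7}$. For $K\ge 2$ the degree-$K$ polynomial growth of $\binom{t}{K}$ in $t\sim s/k$ gives a strictly smaller requirement on $s$.

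\emph{Large $K$.} If $K$ exceeds a constant $K_0(k,g)$, take $t=2$ and $k'=(K+x)/2$ (so $K+x$ must be even) with $x$ chosen in the interval $[(k-2)(2g-4)+1,\,s/2-k+2]$. Then the blow-up power $\binom{k'}{x}$ is polynomial in $k'\sim K/2$ of degree $x$, whereas $m^*(k')$ is polynomial in $k'$ of degree $(k-2)(2g-4)$, so the blow-up dominates once $k'$ (hence $K$) exceeds an absolute constant $K_0(k,g)$. An $x$ of the correct parity exists in the prescribed range once $s\ge 2(k-2)(2g-4)+2k$. The two regimes overlap on $[K_0(k,g),\,s/(k-2)]$ provided $s\ge (k-2)K_0(k,g)$, and so together they cover every $K\ge 1$. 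Taking $h(k,g)$ to be the maximum of the thresholds required in either regime yields the statement, with the dominant contribution $h(k,g)=\Theta(k(k-1)^{4g-7})$ arising from $K=1$. The principal obstacle is reconciling the integrality and parity constraints on $k',t,x$ simultaneously with a threshold uniform in $K$; the case split resolves this by leaving a range of admissible $x$ of length linear in $s$ in the large-$K$ regime, ensuring a valid choice always exists.
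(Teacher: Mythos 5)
Your proof is correct and rests on the same two pillars as the paper's: embed a suitable blow-up via Theorem~\ref{THM:KGBU}, then invoke Theorem~\ref{THM:BUT}, with a case split according to the relative size of the set-size parameter. The execution, however, is genuinely different in its parametrization. The paper writes the Kneser graph as $\KG(2n,n-2x)$ (with divisibility issues waved away), sets $t=x/k$, and works with a base Kneser graph $\KG(2n/t,(n-x)/t)$ whose chromatic number is pinned at $2k+2$ while its size and degree grow; the ``dense'' regime $n-2x\le n/k$ falls back to a blow-up of $K_k$. You instead take $\KG(N,K)$ at face value, and in the large-$K$ regime fix $t=2$ and let $k'=(K+x)/2$ grow, so the base graph $\KG(2k'+k-2,k')$ has chromatic number exactly $k$; this trades a polynomial-degree comparison (degree $x$ in $k'$ for the blow-up power versus degree $(k-2)(2g-4)$ for the required power) for the paper's inequalities (\ref{eq:Kneser1})--(\ref{eq:Kneser3}). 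Your version is easier to keep honest on integrality, at the cost of a short parity argument for $x$. One small point to tighten: the small-$K$ regime $K\le s/(k-2)$ formally allows $K=\lfloor N/k\rfloor=t$, where $\binom{t}{K}=1$ and the blow-up power is useless; this boundary value must be delegated to the large-$K$ regime, which your overlap condition $s\ge(k-2)K_0(k,g)$ does ensure, but it is worth stating that the $K_k$-blow-up argument only covers $K\le t-1$. With that caveat recorded, the proof is sound.
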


\begin{proof}
Let $k$ and $g$ be the parameters from the Erd\H{o}s-Hajnal Conjecture.
Let $\KG(2n, n-2x)$ be a Kneser graph with large chromatic number. Since $\chi(\KG(2n, n-2x)) = 4x+2$, this just means that $x$ is large in terms of $k$ and $g$. Let $t=x/k$. By Theorem~\ref{THM:KGBU}, $\KG(2n,n-2x)$ contains a blow-up of $\KG(2n/t, (n-x)/t)$ with power  $m = {(n-x)(t-1)/t\choose x}$. (Note that $\KG(r,s)$ contains $\KG(r-1,s)$ as a subgraph, and thus, with some neglect of technicalities, we may assume that $x/k$, $2n/t$, $(n-x)/t$, etc. are integers.) Each vertex of $\KG(2n/t, (n-x)/t)$ has degree
${(n+x)/t\choose (n-x)/t} = {(n+x)/t\choose 2x/t}$, which is at most
$$\Delta := ((n+x)/t)^{2x/t} = \Bigl(\frac{nk}x+k\Bigr)^{2k}.$$

In order to apply Theorem \ref{THM:BUT}, we need power $m \ge k(k\Delta)^{2g-4}$ of a graph with chromatic number at least $k$. In the following we assume $x$ is large in terms of $g$ and $k$.
Let us first consider the case when $n > x / \bigl( \frac 12-\frac 1{2k} \bigr)$.
If we write $\tfrac{n}{x} = 2+2z$, this condition implies that $z\ge \tfrac{1}{k}$. If $x>k^2$, then
\begin{equation}
\label{eq:Kneser1}
\frac{n-x}{x} \cdot \frac{t-1}{t} \ge 1+z
\end{equation}
and
\begin{equation}
\label{eq:Kneser2}
 \frac{n}{x} + 1 = 3 + 2z < (1+z)^{x/2}.
\end{equation}
Suppose, moreover, that $x \ge 10gk^2\log k$. Then
\begin{equation}
\label{eq:Kneser3}
 k^{2g(2k+1)} < (1+z)^{x/2}.
\end{equation}
Using inequalities (\ref{eq:Kneser1})--(\ref{eq:Kneser3}), we obtain:
\begin{align*}
   k(k\Delta)^{2g-4} &= k^{2g-3}\Bigl(\frac{nk}x+k\Bigr)^{2k(2g-4)} \\
   &< \Bigl(\frac{(n-x)(t-1)/t}{x}\Bigr)^x \\
   &< {(n-x)(t-1)/t\choose x} = m.
\end{align*}

On the other hand, when $x$ is large enough and $n\le x / \bigl( \frac 12-\frac 1{2k} \bigr)$, then $n-2x \le n/k$ and $\KG(2n,n-2x)$ contains a blow-up of $\KG(k,1) = K_{k}$ with large power.

Thus, we can apply Theorem~\ref{THM:BUT} and conclude that $\KG(2n,n-2x)$ contains a subgraph with girth more than $g$ and chromatic number at least~$k$.
\end{proof}

\end{document}